\newtheorem{theorem}{Theorem}[section]
\newtheorem*{theorem*}{Theorem}
\newtheorem*{conjecture*}{Conjecture}
\newtheorem*{question*}{Question}
\newtheorem*{guess*}{Guess}
\newtheorem*{Assumption*}{Assumption}
\newtheorem*{problem*}{Problem}
\newtheorem{lemma}[theorem]{Lemma}
\newtheorem*{lemma*}{Lemma}
\newtheorem{proposition}[theorem]{Proposition}
\newtheorem*{proposition*}{Proposition}
\newtheorem{corollary}[theorem]{Corollary}
\newtheorem*{corollary*}{Corollary}
\theoremstyle{definition}
\newtheorem*{definition*}{Definition}
\newtheorem*{remark*}{Remark}
\newtheorem*{example*}{Example}
\newtheorem*{examples*}{Examples}
\newcommand{\End}{\operatorname{End}}
\newcommand{\Hom}{\operatorname{Hom}}
\newcommand{\Ext}{\operatorname{Ext}}
\newcommand{\Image}{\operatorname{Im}}
\title{Contramodules for algebraic groups: the existence of mock projectives.}
\author{Dylan Johnston}
\date{}
\begin{document}

\begin{abstract} 
\noindent{}Let $G$ be an affine algebraic group over an algebraically closed field of positive characteristic. Recent work of Hardesty, Nakano, and Sobaje gives necessary and sufficient conditions for the existence of so-called mock injective $G$-modules, that is, modules which are injective upon restriction to all Frobenius kernels of $G$. In this paper, we give analogous results for contramodules, including showing that the same necessary and sufficient conditions on $G$ guarantee the existence of mock-projective contramodules. In order to do this we first develop contramodule analogs to many well-known (co)module constructions.  
\end{abstract}

\maketitle

\section*{Introduction.}
Let $G$ be an affine algebraic group defined over an algebraically closed field of characteristic $p$ which splits over a subfield $\mathbb{F}_p$. Then $G$ admits a Frobenius morphism $F: G \rightarrow G$. Let $G_r = \ker(F^r)$ denote the $r^{th}$ Frobenius kernel. In 2015, Friedlander defined a support theory for rational $G$-modules for many important classes of groups $G$ and showed that a $G$-module has trivial support if and only if it is a mock injective module, that is, a module which is injective when restricted to all Frobenius kernels. It is also shown that mock injectivity of a module is a weaker condition than injectivity, i.e., there are mock injective modules which are not injective as a $G$-module. Such modules are called proper mock injective modules. Recent work of Hardesty, Nakano and Sobaje gives an explicit description of when $G$ admits proper mock injective modules. \cite{hardesty2017existence}

In this paper, we consider the contramodule analog of the work of Hardesty, Nakano and Sobaje. That is, we aim to give a description of when $G$ admits proper mock projective contramodules, i.e., contramodules which are not projective as a $k[G]$-contramodule, but which are projective when restricted to $k[G_r]$. 

Many of the results in this paper look strikingly similar to that of the work of the aforementioned authors, suggesting that looking through the lens of contramodules may be another useful way to investigate properties of algebraic groups $G$.

The contents are as follows. In Section \ref{Sec:intro} we will give the definition of contramodules, and discuss important families of them. We will also describe the induction and restriction functors, before finishing by giving some additional constructions in the case of contramodules over a Hopf algebra.

The remaining sections are highly motivated by the work of Hardesty, Nakano and Sobaje. In Section \ref{Sec: conditions for proper mock projectives} we show that the same conditions on $G$ to ensure existence of proper mock injective modules will also ensure the existence of mock projective contramodules. 

In Section \ref{cofinite radicals} we investigate mock projective modules with certain conditions on the radical, including but not limited to finite co-dimensionally. 

\section*{Acknowledgements.}

I would like to thank my supervisor Professor Dmitriy Rumynin for the many great discussions and suggestions given to me during the completion of this work. The author acknowledges funding from the Heilbronn Institute for Mathematical Research (HIMR) and the UK Engineering and Physical Sciences Research Council under the $\lq\lq$Additional Funding Programme for Mathematical Sciences". (Grant number: EP/V521917/1).

\section{Contramodules}\label{Sec:intro}

\subsection{First definitions}

Let $C$ be a coalgebra over a field $k$. A (left) $C$-contramodule $(B,\theta_B)$, or just $B$, is a $k$-vector space $B$ equipped with a linear map $\theta_B : \Hom_k(C,B) \longrightarrow B$, called the contra-action, satisfying contra-associativity and contra-unity conditions. That is, the following two diagrams commute:

\[\hspace{-0.75ex}
 \begin{tikzcd}[scale cd=0.88]
{\Hom_k\big(C,\Hom_k(C,B)\big)} \arrow[dd, "{\otimes\, \dashv\, \Hom}"] \arrow[rr, "{\Hom_k(C,\theta_B)}"]  & & {\Hom_k(C,B)} \arrow[dd, "\theta_B"] \\
     & &       \\
    {\Hom_k(C \otimes C,B)} \arrow[r, "{\Hom_k(\Delta,B)}"]  & {\Hom_k(C,B)} \arrow[r, "{\theta_B}"] & B
\end{tikzcd}
\hspace{1.5ex}
\begin{tikzcd}[scale cd=0.88]
{\Hom_k(k,B)} \arrow[ddrr, "{\cong}"] \arrow[rr, "{\Hom_k(\epsilon,B)}"]  & & {\Hom_k(C,B)}  \arrow[dd, "{\theta_B}"] \\
     & &       \\
    & & B
\end{tikzcd}\]
where $\lq\lq \otimes \dashv \Hom"$ denotes the tensor-hom adjunction, which for any vector spaces $U,V,W$ is given by identifying $\Hom_k\big(U, \Hom_k(V,W)\big)$ and $\Hom_k\left(V \otimes_k U, W\right)$. We remark that using instead the identification $\Hom_k\big(U, \Hom_k(V,W)\big) \cong \Hom_k(U \otimes_k V, W)$ gives the definition of a right $C$-contramodule. Unless stated otherwise, contramodules will be left contramodules

Given two $C$-contramodules $B$ and $D$, let $\Hom^C(B,D)$ denote the space of contramodule homomorphisms from $B$ to $D$. That is, the linear maps $f: B \longrightarrow D$ such that the following diagram commutes: 
\[
 \begin{tikzcd}[scale cd=1]
{\Hom_k(C,B)} \arrow[dd, "{\Hom_k(C,f})"] \arrow[rr, "{\theta_B}"]  & & {B} \arrow[dd, "f"] \\
     & &       \\
    {\Hom_k(C,D)} \arrow[rr, "{\theta_D}"]  && D.
\end{tikzcd}\]

Now consider the space $\Hom_k(C,k)$, it can be given the structure of a $C$-contramodule by applying the comultiplication of the coalgebra $C$ in the first factor. That is, $\Hom_k(C,k)$ has structure map $\theta: \Hom_k\big(C,\Hom_k(C,k)\big) \longrightarrow \Hom(C,k)$ given by the composition:
\[\Hom_k(C,\Hom_k(C,k)) \cong \Hom_k(C \otimes C, k) \xrightarrow{\Hom_k(\Delta,k)} \Hom_k(C,k).\]
More generally, one may replace $k$ with any vector space $V$, obtaining what we will call the $\textit{free contramodule on }V$. One can show that there is an isomorphism of vector spaces
\[ \Hom^C\big(\Hom_k(C,V),W\big) \cong \Hom_k(V,W)\]
 for each $C$-contramodule $W$. In particular, free contramodules are projective. It follows that any contramodule $B$ is projective if and only if it is a direct summand of a free contramodule. We now see that one may construct projective contramodules by taking the dual of injective comodules. 

 \begin{lemma}\label{inj co gives proj contra}
 Let $C$ be a coalgebra, $(M,\Delta_M)$ an injective right $C$-comodule and $V$ a vector space. Then $\Hom_k(C,V)$ is a projective $C$-contramodule, with contra-action given by the composition:
 \[\Hom_k\big(C,\Hom_k(M,V)\big) \cong \Hom_k(M \otimes C, V) \xrightarrow{\Hom_k(\Delta_M,V)} \Hom_k(M,V).\]
 \end{lemma}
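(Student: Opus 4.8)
The plan is to reduce to the case where $M$ is cofree, for which $\Hom_k(M,V)$ will be visibly free. First I would record that the stated composite really does define a $C$-contramodule structure on $\Hom_k(M,V)$: the contra-associativity and contra-unity axioms are obtained by dualizing, through the tensor--hom adjunction, the coassociativity $(\Delta_M \otimes \operatorname{id}_C)\circ \Delta_M = (\operatorname{id}_M \otimes \Delta)\circ \Delta_M$ and counit $(\operatorname{id}_M \otimes \epsilon)\circ \Delta_M = \operatorname{id}_M$ axioms of the comodule $M$. The same bookkeeping shows that $\Hom_k(-,V)$ is a contravariant, additive functor from right $C$-comodules to left $C$-contramodules: a comodule map $f\colon M \to N$ is carried to a contramodule map $\Hom_k(f,V)\colon \Hom_k(N,V) \to \Hom_k(M,V)$, and finite direct sums are sent to finite direct sums.

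Next I would invoke the structure of injective comodules. The coaction $\Delta_M \colon M \to M \otimes C$ is a comodule embedding, split as a $k$-linear map by $\operatorname{id}_M \otimes \epsilon$, into the cofree comodule $U \otimes C$, where $U$ denotes the underlying vector space of $M$ and the coaction on $U \otimes C$ is $\operatorname{id}_U \otimes \Delta$. Since $M$ is injective, this embedding splits in the category of right $C$-comodules, so there is a comodule $N$ with $M \oplus N \cong U \otimes C$. Applying the additive functor $\Hom_k(-,V)$ then yields a direct sum decomposition of $C$-contramodules
\[\Hom_k(U \otimes C, V) \cong \Hom_k(M,V) \oplus \Hom_k(N,V).\]

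It remains to identify the left-hand side as a free contramodule. Writing $W = \Hom_k(U,V)$ and using the adjunction isomorphism $\Hom_k(U \otimes C, V) \cong \Hom_k(C, W)$, I would check that the contra-action induced by the recipe in the statement agrees, under this isomorphism, with the free contramodule structure map $\Hom_k\big(C,\Hom_k(C,W)\big) \cong \Hom_k(C \otimes C, W) \xrightarrow{\Hom_k(\Delta,W)} \Hom_k(C,W)$ on $W$ described just before the lemma; this is exactly the statement that the cofree coaction $\operatorname{id}_U \otimes \Delta$ becomes the comultiplication $\Delta$ after absorbing the factor of $U$ into the coefficients $W$. Hence $\Hom_k(U \otimes C, V)$ is the free contramodule on $\Hom_k(U,V)$ and is in particular projective, so the direct summand $\Hom_k(M,V)$ is projective as well.

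The main obstacle is not conceptual but notational: both the verification of functoriality and the identification of the cofree case with a free contramodule are dualizations of familiar comodule facts, and the only real care required is in applying the tensor--hom adjunction consistently, so that the coassociativity of $\operatorname{id}_U \otimes \Delta$ matches the comultiplication $\Delta$ appearing in the free contramodule structure map. I expect this last identification to be the step most prone to index errors, so I would carry it out by evaluating both contra-actions on a general element of $\Hom_k\big(C,\Hom_k(U \otimes C, V)\big)$ rather than by a purely diagrammatic argument.
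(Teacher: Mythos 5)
Your proof is correct and takes essentially the same approach as the paper: use injectivity of $M$ to split the coaction $\Delta_M \colon M \to M \otimes C$ in the category of comodules, apply the additive functor $\Hom_k(-,V)$, and identify the dual of the cofree comodule $M \otimes C$ with the free contramodule on $\Hom_k(M,V)$, exhibiting $\Hom_k(M,V)$ as a direct summand of a free, hence projective, contramodule. The extra verifications you spell out (the contramodule axioms for the stated contra-action and the cofree-to-free identification under the tensor--hom adjunction) are left implicit in the paper's proof but are exactly the right bookkeeping.
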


 \begin{proof}
As $M$ is injective, the coaction map $\Delta_M: M \longrightarrow 
M \otimes C$ splits. Applying the additive functor $\Hom_k(-,V)$ yields a split map of contramodules $\Hom_k(M \otimes C,V) \longrightarrow \Hom_k(M,V)$ with contra-actions induced from the co-action on the relevant comodules. However, as $M \otimes C$ is the cofree comodule on $M$ we have an isomorphism $\Hom_k(M \otimes C,V) \cong \Hom_k\big(C,\Hom_k(M,V))$  of contramodules, where the latter is the free contramodule on $\Hom_k(M,V)$. Thus, $\Hom_k(M,V)$ is a direct summand of a free contramodule and is therefore projective. \end{proof}

\subsection{Induction and restriction}

Let $\pi: C \longrightarrow D$ be a map of comodules. Then given a $C$-contramodule $B$ one obtains a $D$-contramodule structure on $B$ via the composition:
\[\Hom_k(D,B) \xrightarrow{\Hom_k(\pi,B)} \Hom_k(C,B) \longrightarrow B.\]
We call this the restriction to $D$ and denote it by $\textup{Res}(B)$ or $B|_D$.
Now, let $M$ be a left $D$-comodule $M$ and let $B$ be a left $D$-contramodule. Then $\text{Cohom}_D(M,B)$ denotes the cohomomorphisms between $M$ and $B$. It is a quotient vector space of $\Hom_k(M,B)$ given by the following coequaliser:
\[ 
\begin{tikzcd}[scale cd=1, sep = huge]
\text{Cohom}_D(M,B) = \text{coeq}\bigg(\Hom_k(D \otimes M,B) \ar[r,shift left=.75ex,"{\Hom_k(\Delta_{M},B)}"]
  \ar[r,shift right=.75ex,swap,"{\Hom_k(M,\theta_B)}"]
&
\Hom_k(M,B)\bigg)\end{tikzcd}\]
where $\Hom_k(M,\theta_B) : \Hom_k(D \otimes M,B)\, \cong\, \Hom_k\big(M, \Hom_k(D,B)\big)  \longrightarrow \Hom_k(M,B).$ In particular, when $M = C$ with $D$-comodule structure given by $(\pi \otimes \textup{id}) \circ \Delta_D$ we can equip $\text{Cohom}_D(C,B)$ with $C$-contramodule structure by observing it is nothing more than a quotient of the free contramodule $\Hom_k(C,B)$. This is the induction from $D$-contramodules to $C$-contramodules, we denote the resulting contramodule by $\textup{Ind}_D^C(B)$. One can show that induction and restriction form an adjoint pair. 

\begin{lemma}
$\textup{Ind}: D\textup{-Contra} \longrightarrow C\textup{-Contra}$ is left adjoint to $\textup{Res}: C\textup{-Contra} \longrightarrow D\textup{-Contra}$, that is, for all $B \in D\textup{-Contra}$ and $V \in C\textup{-Contra}$ we have
\[\Hom^C\big(\textup{Ind}_D^C(B),V\big) \cong \Hom^D\big(B,\textup{Res}_D^C(V)\big). \]
\end{lemma}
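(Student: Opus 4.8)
The plan is to deduce the adjunction from two facts already established in this section: the universal property of the free contramodule, $\Hom^C\big(\Hom_k(C,B),V\big)\cong\Hom_k(B,V)$, and the presentation of $\textup{Ind}_D^C(B)=\text{Cohom}_D(C,B)$ as a quotient of the free contramodule $\Hom_k(C,B)$. Recall that $C$ carries the $D$-comodule structure $(\pi\otimes\textup{id})\circ\Delta_C$, and write
\[
\alpha=\Hom_k\big((\pi\otimes\textup{id})\Delta_C,\,B\big),\qquad \beta=\Hom_k(C,\theta_B)
\]
for the two parallel maps $\Hom_k(D\otimes C,B)\rightrightarrows\Hom_k(C,B)$ whose coequaliser is $\text{Cohom}_D(C,B)$. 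By construction the quotient map $q\colon\Hom_k(C,B)\onto\textup{Ind}_D^C(B)$ is a surjection of $C$-contramodules with kernel $\im(\alpha-\beta)$, so precomposition with $q$ identifies
\[
\Hom^C\big(\textup{Ind}_D^C(B),V\big)\;\cong\;\big\{\,g\in\Hom^C\big(\Hom_k(C,B),V\big)\ :\ g\circ\alpha=g\circ\beta\,\big\}.
\]

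Next I would transport this across the free-contramodule isomorphism. Under it a linear map $\phi\colon B\to V$ corresponds to the contramodule map $g_\phi=\theta_V\circ\Hom_k(C,\phi)$, and the content of the lemma is the claim that
\[
g_\phi\circ\alpha=g_\phi\circ\beta \quad\Longleftrightarrow\quad \phi\circ\theta_B=\theta_V\circ\Hom_k(\pi,V)\circ\Hom_k(D,\phi),
\]
where the right-hand condition says exactly that $\phi$ is a homomorphism of $D$-contramodules $B\to\textup{Res}_D^C(V)$, the operator $\theta_V\circ\Hom_k(\pi,V)$ being the contra-action of $\textup{Res}_D^C(V)$. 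Granting this equivalence, composing the two displayed isomorphisms produces the asserted bijection, and its naturality in $B$ and $V$ is automatic, since every map involved is assembled functorially from $\Delta_C$, $\epsilon$, $\pi$, $\theta_B$ and $\theta_V$.

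It remains to establish the displayed equivalence, which is where the real work lies. For the implication $(\Leftarrow)$ I would evaluate both composites on an arbitrary $F\in\Hom_k(D\otimes C,B)$: the map $g_\phi\circ\alpha$ applies $\theta_V$ once, whereas $g_\phi\circ\beta$, after substituting the $D$-equivariance of $\phi$, applies $\theta_V$ twice; the contra-associativity axiom for $V$ collapses this double application to a single $\theta_V$ precomposed with $\Delta_C$, and coassociativity of $\Delta_C$ then matches it termwise with $g_\phi\circ\alpha$. For the converse $(\Rightarrow)$ one does not need general $F$: feeding in the test maps $F_h\colon d\otimes c\mapsto\epsilon(c)\,h(d)$ for $h\in\Hom_k(D,B)$, the counit axiom gives $\alpha(F_h)=h\circ\pi$ and the contra-unity axiom gives $g_\phi\big(\beta(F_h)\big)=\phi\big(\theta_B(h)\big)$, so the coequaliser identity evaluated at $F_h$ reads precisely $\phi\circ\theta_B=\theta_V\circ\Hom_k(\pi,V)\circ\Hom_k(D,\phi)$. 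The main obstacle is thus the $(\Leftarrow)$ computation: correctly threading the two distinct contra-actions through the contra-associativity square and keeping the Sweedler bookkeeping for $(\pi\otimes\textup{id})\Delta_C$ straight; everything else is formal.
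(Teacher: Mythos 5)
Your argument is correct, but there is nothing in the paper to compare it against: the paper states this lemma with no proof at all, prefaced only by the remark that ``one can show that induction and restriction form an adjoint pair.'' Your route is the natural one and it does go through with the paper's conventions: (i) use the presentation of $\textup{Ind}_D^C(B)$ as the quotient of the free contramodule $\Hom_k(C,B)$ by $\im(\alpha-\beta)$ to identify $\Hom^C\big(\textup{Ind}_D^C(B),V\big)$ with the coequalizing contramodule maps out of $\Hom_k(C,B)$; (ii) transport across the free--forgetful bijection $\phi\mapsto g_\phi=\theta_V\circ\Hom_k(C,\phi)$; (iii) show $g_\phi$ coequalizes $\alpha,\beta$ exactly when $\phi$ is $D$-equivariant for the restricted contra-action. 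I checked the two computations you flag. For $(\Rightarrow)$: $\alpha(F_h)=h\circ\pi$ by counity, $\beta(F_h)=\epsilon(\cdot)\,\theta_B(h)$, and contra-unity of $V$ gives $g_\phi\big(\beta(F_h)\big)=\phi\big(\theta_B(h)\big)$, so the relation evaluated at $F_h$ is precisely $\phi\circ\theta_B=\theta_V\circ\Hom_k(\pi,V)\circ\Hom_k(D,\phi)$, as you say. For $(\Leftarrow)$: for arbitrary $F$, equivariance turns $g_\phi\big(\beta(F)\big)$ into $\theta_V\big(c\mapsto\theta_V\big(c'\mapsto\phi(F(\pi(c')\otimes c))\big)\big)$, and contra-associativity of $\theta_V$, with the paper's adjunction convention $\Hom_k\big(U,\Hom_k(V,W)\big)\cong\Hom_k(V\otimes U,W)$, collapses this to $\theta_V\big(c\mapsto\phi(F(\pi(c_{(1)})\otimes c_{(2)}))\big)=g_\phi\big(\alpha(F)\big)$; note that your final appeal to coassociativity of $\Delta_C$ is not actually needed, since the collapsed expression is already $g_\phi\circ\alpha$ by the definition of $\alpha$. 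Two points you pass over quickly deserve a sentence in a written-up version: the factorization in (i) of a coequalizing contramodule map through the quotient is again a contramodule map (this uses that $\Hom_k(C,q)$ is surjective, which holds because $k$ is a field), and the identification in (i) presumes, as the paper's construction of $\textup{Ind}$ does, that $\im(\alpha-\beta)$ is a subcontramodule so that the quotient contra-action exists.
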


\subsection{Contramodules over a Hopf algebra}

In the case of (co-)modules over a (co-)algebra, if one in fact has a Hopf algebra structure then one may equip the relevant module category with a monoidal structure. For contramodules this is not quite the case, instead, we can produce new contramodules via a bifunctor which takes a right comodule and left contramodule as arguments. In this subsection, we explicitly describe this bifunctor and give some properties of it.

Let $(H,\nabla,\eta,\Delta,\varepsilon,S)$ be a Hopf algebra. Recall that one may write the comultipication using Sweedler's notation, that is, given $c \in H$ we write the comultiplication as $\Delta(c) = \sum c_{(1)} \otimes c_{(2)}$, with coassociativity implying that we may write $(\textup{id}\otimes\Delta)\circ \Delta(c) = (\Delta \otimes \textup{id})\circ \Delta(c) = \sum c_{(1)} \otimes c_{(2)} \otimes c_{(3)}.$
Given a right $H$-comodule $M$ and a left $H$-contramodule $B$ we may equip $\Hom_k(M,B)$ with a ``diagonal" contramodule structure via the following composition: 

\[\Hom_k\big(H,\Hom_k(M,B)\big) \xrightarrow{\Hom_k(\nabla,\Hom_k(M,B))} \Hom_k\big(H \otimes H, \Hom_k(M,B)\big) \]
\[\cong \Hom_k\big(M \otimes H, \Hom_k(H,B)\big)\xrightarrow{\Hom_k(M,\theta_B) \circ \Hom_k(\Delta_M,\Hom_k(H,B))} \Hom_k(M,B)\]
where the identification is given by $\Hom_k\big(T \otimes U, \Hom_k(V,W)\big) \cong \Hom_k\big(V \otimes T, \Hom_k(U,W)\big)$. One readily checks that this gives $\Hom_k(M,B)$ the structure of a $H$-contramodule. That is, given a Hopf algebra $H$ we have a bifunctor
\[ \Hom_k(-,-): \text{Comod-}H^{\text{op}} \times H\text{-Contra} \longrightarrow H\text{-Contra}.\]
Let $G$ be an algebraic group over a field $k$, and let $k[G]$ denote its coordinate ring. Then $k[G]$ is a Hopf algebra. Moreover, let $T \subset G$ denote the maximal torus, and $k[T]$ be its coordinate ring. Finally, let $\chi(T) \subset k[T]$ denote the weights of $T$. \cite[I.2.4]{jantzen2003representations} Recall that for a right $k[G]$-comodule $M$ the weight spaces are given by $M_\lambda = \{m \in M : \Delta_M(m) = m \otimes \lambda\}$ for $\lambda \in \chi(T).$ We may also define weight spaces for contramodules, namely, given a $k[G]$-contramodule $B$ we define the weight space with weight $\lambda \in \chi(T)$ as
\[B_\lambda = \big\{b \in B : \text{ for all } \phi \in \Hom_k\big(k[T],\langle b \rangle\big) \text{ we have } \phi(\lambda) = \theta(\phi)\big\}. \] 
We now describe the weight spaces of $\Hom_k(M,B)$. Note that we use additive notation for the weights.

\begin{lemma}
Let $M$ be a right $k[G]$-comodule and $B$ a left $k[G]$-contramodule. Then we have 
\[\Hom_k(M,B)_{\lambda + \mu} = \prod_{\alpha + \beta = \lambda + \mu} \Hom_k(M_\alpha,B_\beta). \]
\end{lemma}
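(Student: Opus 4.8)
The plan is to compute everything after restriction to the maximal torus $T$, since the weight spaces depend only on the $T$-(co/contra)module structures. First I would record that the diagonal contra-action is natural with respect to the Hopf-algebra quotient $k[G] \twoheadrightarrow k[T]$, so that $\Hom_k(M,B)|_T$ is precisely the diagonal $k[T]$-contramodule built from $M|_T$ and $B|_T$; this lets me replace $k[G]$ by $k[T]$ throughout. Over $T$ the situation becomes concrete: $k[T]$ has the characters $\chi(T)$ as a basis of grouplike elements (with $\Delta(\lambda) = \lambda \otimes \lambda$ and multiplication $\lambda \cdot \mu = \lambda + \mu$ in additive notation), the comodule decomposes as $M = \bigoplus_\alpha M_\alpha$, and since $\langle b\rangle = kb$ is one-dimensional the contramodule weight condition unwinds to the criterion: $b \in B_\beta$ if and only if $\theta_B(\psi \cdot b) = \psi(\beta)\,b$ for every $\psi \in \Hom_k(k[T],k)$, where $\psi \cdot b \in \Hom_k(k[T], kb)$ denotes the map $c \mapsto \psi(c)\,b$.

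The key computation is to make the diagonal action explicit on a weight vector. Tracing $\Phi \in \Hom_k\big(k[T], \Hom_k(M,B)\big)$ through the three steps defining the diagonal structure -- precomposition with $\nabla$, the tensor--hom reshuffle $\Hom_k(T\otimes U, \Hom_k(V,W)) \cong \Hom_k(V \otimes T, \Hom_k(U,W))$, and then $\Hom_k(M,\theta_B)\circ\Hom_k(\Delta_M, -)$ -- I expect to obtain, for $m \in M_\alpha$,
\[\Theta(\Phi)(m) = \theta_B\big(\mu \mapsto \Phi(\alpha\mu)(m)\big),\]
the inner expression being regarded as an element of $\Hom_k(k[T], B)$. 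Specialising to $\Phi = \psi \cdot f$ for $\psi \in \Hom_k(k[T],k)$ and $f \in \Hom_k(M,B)$, the inner map becomes $\mu \mapsto \psi(\alpha\mu)\,f(m) = \psi_\alpha(\mu)\,f(m)$, where $\psi_\alpha(\mu) \defeq \psi(\alpha\mu)$, so that $\Theta(\psi \cdot f)(m) = \theta_B(\psi_\alpha \cdot f(m))$.

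With this in hand the weight condition for $\Hom_k(M,B)$ reads: $f$ lies in $\Hom_k(M,B)_\nu$ if and only if $\theta_B(\psi_\alpha \cdot f(m)) = \psi(\nu)\,f(m)$ for all $\psi$, all $\alpha$, and all $m \in M_\alpha$. The decisive point is that translation by $\alpha$ is a bijection of $\chi(T)$, so $\psi \mapsto \psi_\alpha$ is a bijection of $\Hom_k(k[T],k)$; relabelling $\psi_\alpha$ as the free functional and noting $\psi(\nu) = \psi_\alpha(\nu - \alpha)$, the condition for fixed $\alpha$ becomes exactly $\theta_B(\psi' \cdot f(m)) = \psi'(\nu-\alpha)\,f(m)$ for all $\psi'$, i.e.\ $f(m) \in B_{\nu-\alpha}$ by the contramodule weight criterion above. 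Hence $f \in \Hom_k(M,B)_\nu$ precisely when $f(M_\alpha) \subseteq B_{\nu-\alpha}$ for every $\alpha$. Applying $\Hom_k\big(\bigoplus_\alpha M_\alpha, B\big) = \prod_\alpha \Hom_k(M_\alpha, B)$ then identifies this set of maps with $\prod_\alpha \Hom_k(M_\alpha, B_{\nu-\alpha}) = \prod_{\alpha+\beta=\nu}\Hom_k(M_\alpha, B_\beta)$, which is the claim (with $\nu = \lambda + \mu$).

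The main obstacle I anticipate is the first explicit computation: correctly threading $\Phi$ through the two tensor--hom identifications, which swap the roles of the inner and outer copies of $k[T]$, without slot errors, since the whole argument hinges on the precise appearance of the shift $\psi_\alpha$. Secondary care is needed to justify the reduction to $T$ (naturality of the diagonal structure under $k[G]\twoheadrightarrow k[T]$) and to confirm that the reading $\langle b\rangle = kb$ makes the contramodule weight definition equivalent to the stated criterion; both are routine but must be pinned down before the shift argument carries weight.
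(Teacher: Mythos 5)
Your proposal is correct, and its computational core is the same as the paper's: the paper likewise traces an element $h \mapsto \phi_h$ of $\Hom_k\big(k[T],\Hom_k(M,B)\big)$ through the three steps of the diagonal structure and lands on $m \mapsto \theta_B\big(f \mapsto \phi_{\alpha f}(m)\big)$ for $m \in M_\alpha$, which is exactly your formula $\Theta(\Phi)(m) = \theta_B\big(\mu \mapsto \Phi(\alpha\mu)(m)\big)$. Where you genuinely differ is in how equality of the two sides, rather than a single inclusion, is established. The paper runs the computation only for maps valued in $\Hom_k(M_\alpha,B_\beta)$ with $\alpha+\beta=\lambda+\mu$, obtaining the inclusion $\Hom_k(M_\alpha,B_\beta) \subseteq \Hom_k(M,B)_{\lambda+\mu}$, and then settles the reverse inclusion in one line by appealing to $\Hom_k(M,B) = \Hom_k\big(\bigoplus_\alpha M_\alpha, \prod_\beta B_\beta\big) = \prod_{\alpha,\beta}\Hom_k(M_\alpha,B_\beta)$; this tacitly uses that the restricted contramodule $B$ is the product of its $k[T]$-weight spaces --- a nontrivial fact for contramodules, essentially Lemma~\ref{contras_over_decomp_coalgebras}, which the paper only records in the following section --- and it does not spell out why the components of a weight-$(\lambda+\mu)$ vector indexed by pairs with $\alpha+\beta \neq \lambda+\mu$ must vanish. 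You instead prove the two-sided criterion that $f \in \Hom_k(M,B)_\nu$ if and only if $f(M_\alpha) \subseteq B_{\nu-\alpha}$ for every $\alpha$, via the relabelling bijection $\psi \mapsto \psi_\alpha$ of $\Hom_k(k[T],k)$ (legitimate, since multiplication by the grouplike $\alpha$ permutes the basis $\chi(T)$ of $k[T]$). This yields both inclusions simultaneously, uses only the decomposition $M = \bigoplus_\alpha M_\alpha$ on the comodule side, and never needs $B = \prod_\beta B_\beta$. In short: the paper's route is shorter if one grants the weight decomposition of contramodules over $k[T]$, while yours is self-contained and actually supplies the converse inclusion that the paper leaves implicit. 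Your two flagged obligations --- naturality of the diagonal action under $k[G] \twoheadrightarrow k[T]$, and unwinding the definition of $B_\beta$ on the line $\langle b \rangle = kb$ --- are routine and equally tacit in the paper's version.
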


\begin{proof}
Let $\alpha, \beta \in \chi(T)$ such that $\alpha + \beta = \lambda + \mu$. We calculate explicitly the image of $\Hom_k(M_\alpha,B_\beta)$ under the diagonal action. Given $h \mapsto \phi_h \in \Hom_k(H,\Hom_k(M_\alpha,B_\beta)$ we have 
\[\big(h \mapsto \phi_h\big) \longmapsto \big(h \otimes h' \mapsto \phi_{hh'} \big) \longmapsto \big(m \otimes h \mapsto (h' \mapsto \phi_{hh'}(m)\big) \longmapsto \big(m \mapsto \theta_B(f \mapsto \phi_{\alpha f}(m)\big) = \phi_{\alpha + \beta}\]
so indeed $\Hom_k(M_\alpha,B_\beta) \subset \Hom_k(M,B)_{\lambda + \mu}$. Equality follows from the fact that \[\Hom_k(M,B) = \Hom_k\Big(\bigoplus_\lambda M_\lambda, \prod_\mu B_\mu\Big) = \prod_{\lambda,\mu} \Hom_k\big(M_\lambda,B_\mu\big).\]
\end{proof}

To conclude the section, we give the following lemma, which may be thought of as a contra-analog of the tensor identity for modules. \cite[I.3.6]{jantzen2003representations} We will dub this the $\lq\lq$hom identity for contramodules".

\begin{lemma}[Hom identity]\label{Hom-identity}
Let $M$ be a right $k[G]$-comodule, and $B$ a left $k[G]$-contramodule, then 
\[ \Hom_k\left(M,\textup{Ind}_{k[H]}^{k[G]}B\right) = \textup{Ind}_{k[H]}^{k[G]}\Big(\Hom_k(M,B)\Big),\] where the $k[G]$-contramodule structure on $\Hom_k(-,-)$ is the diagonal action in both cases.
\end{lemma}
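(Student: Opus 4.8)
The plan is to present both sides as coequalisers (cokernels) of one and the same pair of parallel maps, and to carry one presentation onto the other by an explicit ``shearing'' isomorphism, which is the contramodule shadow of the shear used to prove the classical tensor identity. On the right-hand side I regard $M$ as a right $k[H]$-comodule via the restriction map $\pi\colon k[G]\to k[H]$, so that the inner $\Hom_k(M,B)$ carries the diagonal $k[H]$-contra-action, matching the convention in the statement. First I would recall that $\textup{Ind}_{k[H]}^{k[G]}B=\text{Cohom}_{k[H]}(k[G],B)$ is, by definition, the coequaliser of a pair of parallel maps from $\Hom_k\big(k[H]\otimes k[G],B\big)$ to $\Hom_k(k[G],B)$: one built from the $k[H]$-comodule structure $\rho\colon k[G]\to k[H]\otimes k[G]$, the other from the contra-action $\theta_B$. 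Since $k$ is a field, $M$ is projective as a $k$-module, hence $\Hom_k(M,-)$ is exact and preserves this coequaliser; thus $\Hom_k\big(M,\textup{Ind}_{k[H]}^{k[G]}B\big)$ is computed by applying $\Hom_k(M,-)$ termwise to the diagram above.

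The heart of the matter is a natural isomorphism of $k[G]$-contramodules
\[ \sigma_N\colon\ \Hom_k\big(k[G],\Hom_k(M,N)\big)\ \longrightarrow\ \Hom_k\big(M,\Hom_k(k[G],N)\big), \]
where the source carries the free contra-action on the vector space $\Hom_k(M,N)$ and the target carries the diagonal action built from the right $k[G]$-comodule $M$ and the free contramodule $\Hom_k(k[G],N)$. I would define $\sigma_N$ by precomposition with the shear $M\otimes k[G]\to M\otimes k[G]$, $m\otimes c\mapsto\sum m_{(0)}\otimes m_{(1)}c$, whose inverse $m\otimes c\mapsto\sum m_{(0)}\otimes S(m_{(1)})c$ uses the antipode; after the tensor-hom identifications this is a linear bijection between the two spaces. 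The essential point is that $\sigma_N$ intertwines the free and diagonal contra-actions: unwinding the diagonal action (assembled from $\nabla$, $\Delta_M$ and $\theta_N$) against the free action (assembled from $\Delta$ alone) reduces, in Sweedler notation, to the coassociativity, counit and comodule axioms together with the antipode identity for $S$. This verification is the main obstacle and essentially the only computational step.

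With $\sigma$ in hand I would check that it identifies the two coequaliser diagrams, namely the image under $\Hom_k(M,-)$ of the diagram for $B$ and the defining diagram of $\text{Cohom}_{k[H]}\big(k[G],\Hom_k(M,B)\big)=\textup{Ind}_{k[H]}^{k[G]}\big(\Hom_k(M,B)\big)$. Naturality of $\sigma_N$ in $N$ makes the square for the $\rho$-map commute (here one also uses compatibility of $\Delta_M$ with $\pi$), while the square for the contra-action map commutes precisely because $\sigma$ intertwines the free action coming from $\theta_B$ with the diagonal action, the latter being by definition $\theta_{\Hom_k(M,B)}$. By the universal property of the coequaliser these data descend to an isomorphism $\Hom_k\big(M,\textup{Ind}_{k[H]}^{k[G]}B\big)\cong\textup{Ind}_{k[H]}^{k[G]}\big(\Hom_k(M,B)\big)$ of vector spaces.

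Finally I would upgrade this to a $k[G]$-contramodule isomorphism. Because the contra-action on an induced contramodule is inherited from the free contramodule of which it is a quotient, and because $\Hom_k(M,-)$ with the diagonal action is an exact endofunctor of $k[G]\text{-Contra}$, the whole coequaliser computation may be carried out inside $k[G]\text{-Contra}$; equivariance of the descended map then follows formally from the equivariance of $\sigma$ established in the second paragraph. I expect the antipode bookkeeping of that second paragraph --- tracking $S$ through the tensor-hom adjunctions so that the diagonal action is carried onto the free action --- to be where all the real work lies, with exactness of $\Hom_k(M,-)$ and the descent through the coequaliser being formal consequences.
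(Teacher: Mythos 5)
Your proposal is correct and takes essentially the same route as the paper: both sides are presented as coequalisers of maps between Hom-spaces and identified via the antipode shear (your $\sigma_N$ and its inverse are, after the tensor-hom identifications, exactly the paper's maps $\beta$ and $\alpha$), followed by a descent check and a contramodule-equivariance check. The only difference is organisational: you prove equivariance first at the level of free contramodules (the case of trivial $H$, which the paper instead deduces \emph{from} the lemma as its subsequent corollary) and then descend inside $k[G]\text{-Contra}$, whereas the paper descends at the vector-space level via explicit endomorphisms $T$ and $U$ and verifies equivariance of the descended map last.
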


\begin{proof}
For ease of notation throughout the proof, we assign the labels 
\[ L:= \Hom_k\left(M,\textup{Ind}_{k[H]}^{k[G]}B\right)\,\,\,\,R := \textup{Ind}_{k[H]}^{k[G]}\Big(\Hom_k(M,B)\Big). \]
Recalling the definition of $\text{Cohom}$ from section \ref{Sec:intro}, we see that as vector spaces both $L$ and $R$ are quotients of $\Hom_k(k[G] \otimes M,B)$. The steps of the proof will be as follows:

\begin{enumerate}
    \item Define linear maps $\alpha : \Hom_k(k[G] \otimes M,B) \longleftrightarrow \Hom_k(k[G] \otimes M,B): \beta$ with $\alpha$ and $\beta$ inverse to one another (as maps of vector spaces).
    \item Show that $\alpha$ and $\beta$ factor to give maps from $L$ to $R$. (Note that this is a well-definedness check.)
    \item Show that in fact $\alpha,\beta$ are $k[G]$-contramodule homomorphisms.
\end{enumerate}
To begin, we define $\alpha$ and $\beta$ as follows:
\begin{align*}
    \alpha: \Hom_k(k[G] \otimes M,B) &\longrightarrow \Hom_k(k[G] \otimes M,B): \beta \\
    \phi &\longmapsto \phi \circ \mu^T \circ (\text{Id}_{k[G] \otimes M} \otimes S) \circ (\text{Id}_{k[G]} \otimes \Delta_M) \\
    \psi \circ \mu^T \circ (\text{Id}_{k[G]} \otimes \Delta_M) &\longleftarrow \psi
\end{align*}
where $\mu^T : k[G] \otimes M \otimes k[G] \longrightarrow k[G] \otimes M$ denotes a certain twisted multiplication, given by $\mu^T(f \otimes m \otimes g) = gf \otimes m.$ Concretely, we have for $f \otimes m \in k[G] \otimes M$
\[ \alpha(\phi)(f \otimes m) = \phi\big(S(m_{(1)})f \otimes m_{(0)}\big) \]
\[ \beta(\psi)(f \otimes m) = \psi\big(m_{(1)}f \otimes m_{(0)}\big). \]

We now check that these maps are inverse to one another. We will check that $\beta \circ \alpha \equiv \text{Id}_{\Hom_k(k[G] \otimes M,B)}$, checking that $\alpha \circ \beta \equiv \text{Id}$ is similar. Let $\phi \in \Hom_k(k[G] \otimes M,B)$. Then we have for any $f \otimes m \in k[G] \otimes M$:
\begin{align*}
(\beta \circ \alpha)(\phi)(f \otimes m) &= \beta\Big(f \otimes m \longmapsto \phi\big(S(m_{(1)})f \otimes m_{(0)}\big)\Big)\\ &= \Big(f \otimes m \longmapsto \phi\big(S(m_{(1)})m_{(2)}f \otimes m_{(0)}\big)\Big) \\
&= \Big(f \otimes m \longmapsto \phi\big(\varepsilon(m_{(1)})f \otimes m_{(0)}\big)\Big) = \Big(f \otimes m \longmapsto \phi\big(f \otimes m)\Big)\\
\end{align*}

Now, as vector spaces, we have (with implicit applications of the tensor hom adjunction):

\begin{tikzcd}[scale cd=1, sep = huge]
L = \text{coeq}\bigg(\Hom_k(M \otimes k[H] \otimes k[G],B) \ar[rr,shift left=.75ex,"{\Hom_k\big(M \otimes \Delta_{k[G]},B\big)}"]
  \ar[rr,shift right=.75ex,swap,"{\Hom_k\big(M \otimes k[G],\theta_B\big)}"]
&& \Hom_k(M \otimes k[G],B)\bigg).\end{tikzcd}

\begin{tikzcd}[scale cd=1, sep = huge]
R = \text{coeq}\bigg(\Hom_k(M \otimes k[H] \otimes k[G],B) \ar[rr,shift left=.75ex,"{\Hom_k\big(M \otimes \Delta_{k[G]},B\big)}"]
  \ar[rr,shift right=.75ex,swap,"{\Hom_k\big(k[G],\theta_{\Hom_k(M,B)}\big)}"]
&& \Hom_k(M \otimes k[G],B)\bigg).\end{tikzcd}

We will write $f_L = \Hom_k\big(M \otimes \Delta_{k[G]},B\big)$, $g_L = \Hom_k\big(M \otimes k[G],\theta_B\big)$ for the two maps defining $L$, and also $f_R = \Hom_k\big(M \otimes \Delta_{k[G]},B\big)$ and $g_R = \Hom_k\big(k[G],\theta_{\Hom_k(M,B)}\big)$ for the two maps defining $R$, to once again ease notation slightly. 

Now, $\alpha$ composed with the natural quotient map from $\Hom_k(M \otimes k[G],B)$ to $R$ gives us a map from $\Hom_k(M \otimes k[G],B)$ to $R$ which we also denote $\alpha$. We now want to check that this $\alpha$ factors through $L$.
In other words, we wish to check that $\Image\big(\alpha \circ (f_L - g_L)\big) \subset \Image\big(f_R - g_R\big)$. 

This is equivalent to finding a linear endomorphism $T \in \End\Big(\Hom_k( k[H] \otimes k[G] \otimes M,B) \Big)$ with $\alpha \circ (f_L - g_L) = (f_R - g_R) \circ T $. One checks that $T := \Hom_k\Big(\mu^{5,1}_{4,2} \circ \big(\text{Id}^{\otimes 2} \otimes \big((\text{Id} \otimes S^{\otimes 2}) \circ \Delta^2_M\big)\big),B\Big)$ satisfies this, where both $\mu_{ij}$ and $\mu^{ij}$ denotes multiplication given by taking the element in the $i^{th}$ factor of the tensor and the $j^{th}$ factor of the tensor, multiplying them together, and letting the resulting product replace the factor taken from the $j^{th}$ position. Concretely we have, for an algebra $A$, say,
\begin{align*}    
\mu_{ij} : \underbrace{A \otimes A \otimes \dots \otimes A}_{n \text{ copies}} &\longrightarrow \underbrace{A \otimes \dots \otimes A}_{n-1 \text{ copies}} \\
a_1 \otimes \dots \otimes a_i \otimes \dots \otimes a_j \otimes \dots \otimes a_n &\longmapsto a_1 \otimes \otimes \dots a_{i-1} \otimes a_{i+1} \otimes \dots \otimes a_ia_j \otimes \dots \otimes a_n.
\end{align*}

Similarly, to show that $\beta$ gives a well defined map from $R$ to $L$, we must find a linear endomorphism $U$ such that $(f_L - g_L) \circ U = \beta \circ (f_R - g_R).$ Once again, one readily checks that $U := \Hom_k\Big(\mu^{5,2}_{4,1} \circ \big(\text{Id}^{\otimes 2} \otimes \Delta^2_M),B\Big)$ satisfies this condition.

Thus far, we have that $\alpha: L \longleftrightarrow R : \beta$ gives an isomorphism of vector spaces. To conclude we show that, in fact, $\alpha$ is a map of contramodules. It will be sufficient to check that the following diagram commutes:

\[
 \begin{tikzcd}[scale cd=1]
{\Hom_k\Big(k[G],\Hom_k\big(M,\Hom_k(k[G],B)\big)\Big)} \arrow[dd, "{\Hom_k(k[G],\alpha})"] \arrow[rr, "{\theta_{\text{diag}}}"]  
& & {\Hom_k\Big(M,\Hom_k(k[G],B)\Big)} \arrow[dd, "\alpha"] \\  
& &       \\
{\Hom_k\Big(k[G],\Hom_k\big(M,\Hom_k(k[G],B)\big)\Big)} \arrow[rr, "{\theta_{\text{free}}}"]  
&& {\Hom_k\Big(M,\Hom_k(k[G],B)\Big)}.
\end{tikzcd}\]
Here, $\theta_{\text{diag}}$ denotes the diagonal contra-action on $\Hom_k\big(M,\Hom_k(k[G],B)\big)$, and $\theta_{\text{free}}$ denotes the free contra-action on $\Hom_k\Big(M,\Hom_k(k[G],B)\Big) \cong \Hom_k\Big(k[G],\Hom_k(M,B)\Big).$

Let $\varphi \in 
\Hom_k\Big(k[G],\Hom_k\big(M,\Hom_k(k[G],B)\big)\Big)$ be denoted as the map $f \mapsto \Big(m \mapsto \big(g \mapsto b(f,m,g)\big)\Big)$
where $b(-,-,-) : k[G] \otimes M \otimes k[G] \longrightarrow B$ , travelling vertically then horizontally gives us 
\begin{align*}
    \Big(\theta_{\text{free}} (\alpha \circ \varphi)\Big)(m) &= \bigg(m \longmapsto \Big(f \longmapsto b\big(f_{(2)},m_{(0)},S(m_{(1)})f_{(1)}\big)\Big)\bigg).
\end{align*}
On the other hand, travelling horizontally then vertically gives us 
\begin{align*}
    \Big(\alpha \circ \theta_{\text{diag}} (\varphi)\Big)(m) &= \bigg(m \longmapsto \Big(f \longmapsto b\big(m_{(1)}S(m_{(2)})f_{(2)},m_{(0)},S(m_{(3)})f_{(1)}\big)\Big)\bigg)
\end{align*}
which after observing that $m_{(0)} \otimes m_{(1)}S(m_{(2)}) \otimes m_{(3)} = m_{(0)} \otimes m_{(1)} \otimes 1$ and using that $b(-,-,-)$ is tensorial gives equality.

Finally, since $\alpha: L \rightarrow R$ is a $k[G]$-contramodule isomorphism with linear inverse $\beta: R \rightarrow L$, we deduce that $\beta$ is also a $k[G]$-contramodule isomorphism and the proof is complete. 
\end{proof}

We immediately obtain the following corollary, which we give now for later use. 

\begin{corollary}
Let $P$ be a projective $k[G]$-contramodule. Then, for any right $k[G]$-comodule $M$ we have that $\Hom_k(M,P)$, with diagonal action, is a projective $k[G]$-contramodule.
\end{corollary}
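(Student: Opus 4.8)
The plan is to reduce the statement to the case where $P$ is free and then invoke the Hom identity (Lemma \ref{Hom-identity}). Recall from the discussion preceding Lemma \ref{inj co gives proj contra} that a contramodule is projective precisely when it is a direct summand of a free contramodule. So the first step is to fix a vector space $V$ together with contramodule homomorphisms $i: P \to \Hom_k(k[G],V)$ and $r: \Hom_k(k[G],V) \to P$ satisfying $r \circ i = \textup{Id}_P$, exhibiting $P$ as a direct summand of the free contramodule on $V$.

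The key observation is that a free contramodule is nothing but an induction from the trivial subgroup. Indeed, taking $H$ to be trivial so that $k[H] = k$ and $\pi = \varepsilon : k[G] \to k$, one checks directly from the coequaliser defining $\textup{Cohom}$ that $\textup{Ind}_{k}^{k[G]}(V) = \Hom_k(k[G],V)$ as $k[G]$-contramodules, the right-hand side carrying its free structure (here the counit axiom makes the $k$-comodule structure on $k[G]$ trivial, and both maps in the coequaliser become the identity). Applying the Hom identity with this choice of $H$ then yields
\[ \Hom_k\big(M, \Hom_k(k[G],V)\big) = \Hom_k\big(M, \textup{Ind}_{k}^{k[G]} V\big) \cong \textup{Ind}_{k}^{k[G]}\big(\Hom_k(M,V)\big) = \Hom_k\big(k[G], \Hom_k(M,V)\big), \]
so that $\Hom_k(M, \Hom_k(k[G],V))$, equipped with the diagonal action, is itself the free contramodule on the vector space $\Hom_k(M,V)$, and in particular projective.

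The final step is to transport the splitting of $P$ through the functor $\Hom_k(M,-)$. Since $\Hom_k(-,-)$ is the bifunctor $\text{Comod-}k[G]^{\text{op}} \times k[G]\text{-Contra} \to k[G]\text{-Contra}$ constructed above, fixing $M$ gives an additive functor $\Hom_k(M,-)$ on $k[G]$-contramodules; applying it to $i$ and $r$ produces contramodule homomorphisms for the diagonal actions whose composite $\Hom_k(M,r)\circ\Hom_k(M,i) = \Hom_k(M, r\circ i)$ equals the identity on $\Hom_k(M,P)$. Hence $\Hom_k(M,P)$ is a direct summand of the free (in particular projective) contramodule $\Hom_k(M,\Hom_k(k[G],V))$, and is therefore projective.

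The routine part is verifying that $\Hom_k(M,-)$ genuinely lands in the diagonal contramodule category, i.e.\ that post-composition $\Hom_k(M,f)$ is a contramodule homomorphism for the diagonal actions whenever $f$ is, which amounts to naturality of the diagonal action in its contramodule argument. The only genuinely new point to check is the identification of the free contramodule with $\textup{Ind}_k^{k[G]}$ and the clean specialisation of the Hom identity to the trivial subgroup; once that is in place the summand argument is purely formal, so I do not anticipate a serious obstacle.
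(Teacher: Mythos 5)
Your proof is correct and takes essentially the same route as the paper: the paper splits the contra-action map $\theta:\Hom_k(k[G],P)\to P$ (rather than an arbitrary presentation of $P$ as a summand of a free contramodule $\Hom_k(k[G],V)$), applies the additive functor $\Hom_k(M,-)$ to that splitting, and invokes Lemma \ref{Hom-identity} to identify $\Hom_k\big(M,\Hom_k(k[G],P)\big)$ with the free contramodule $\Hom_k\big(k[G],\Hom_k(M,P)\big)$. Your explicit check that a free contramodule is the induction $\textup{Ind}_{k}^{k[G]}$ from the trivial subgroup is a detail the paper leaves implicit when citing the Hom identity, but the underlying argument is the same.
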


\begin{proof}
Since $P$ is projective, the contra-action map $\theta : \Hom_k(k[G],P) \longrightarrow P$ splits. Now, consider the additive functor $\Hom_k(M,-): k[G]-\text{Contra} \longrightarrow k[G]-\text{Contra} $ which equips the resulting contramodule with the diagonal action. Applying this to $\theta$ above we have 
\[ \Hom_k\big(M,\Hom_k(k[G],P)\big) \xrightarrow[]{\Hom_k(M,\theta)} \Hom_k(M,P)\]
which splits. Thus, $\Hom_k(M,P)$ is a direct summand of $\Hom_k\big(M,\Hom_k(k[G],P)\big)$. However, by the previous lemma we hav $\Hom_k\big(M,\Hom_k(k[G],P)\big) \cong \Hom_k\big(k[G],\Hom_k(M,P)\big)$, the free contramodule on the vector space $\Hom_k(M,P)$. Therefore $\Hom_k(M,P)$ is a direct summand of a free contramodule and is therefore projective. 
\end{proof}

To conclude the section, we give a final contra-analog of a construction well-known for modules over a group. Let $G = N \ltimes K$. Then one has $\text{ind}_K^G M = k[G] \otimes_{k[K]} M = k[N] \otimes M$ where $K < G$ acts on $k[N]$ via conjugation. For contramodules the obvious analog holds, we have:

\begin{lemma}\label{semi-direct-induction}
Let $G = N \ltimes K$ with associated coordinate rings $k[G], k[N]$ and $k[K]$. Then for any $k[K]$-contramodule $(M,\theta_M)$ we have:
\[Ind_{k[K]}^{k[G]}(M) \cong \Hom_k(k[N],M)\]
where the contramodule structure on the right hand side is the diagonal action and the right $k[K]$-comodule structure on $k[N]$ is induced from the conjugation action of $K$ on $k[N]$.
\end{lemma}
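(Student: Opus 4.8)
The plan is to reduce everything to the product decomposition of the underlying variety. Since $N \normal G$, the multiplication map $N \times K \to G$, $(n,k) \mapsto nk$, is an isomorphism of varieties, so it induces an algebra isomorphism $k[G] \cong k[N] \otimes k[K]$ under which the restriction $\pi \colon k[G] \to k[K]$ becomes $\varepsilon_{k[N]} \otimes \textup{id}$ and the projection dual to $N \hookrightarrow G$ becomes $p = \textup{id} \otimes \varepsilon_{k[K]} \colon k[G] \to k[N]$. Recall that $\textup{Ind}_{k[K]}^{k[G]}(M) = \text{Cohom}_{k[K]}(k[G],M)$ is the coequaliser of $f = \Hom_k(\rho,M)$ and $g = \Hom_k(k[G],\theta_M)$ out of $\Hom_k(k[K] \otimes k[G], M)$, where $\rho = (\pi \otimes \textup{id}) \circ \Delta_{k[G]} \colon k[G] \to k[K] \otimes k[G]$ is the left $k[K]$-coaction. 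Dualising the multiplication $K \times G \to G$ identifies $\rho$ with left translation by $K$, which on $k[N] \otimes k[K]$ is the conjugation coaction on $k[N]$ combined with the left regular coaction on $k[K]$; this is precisely where the conjugation action of $K$ on $N$ enters.

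First I would produce the underlying linear isomorphism. Via $\Hom_k(k[G],M) \cong \Hom_k\big(k[N], \Hom_k(k[K],M)\big)$, define $\Phi \colon \Hom_k(k[G],M) \to \Hom_k(k[N],M)$ by contracting the $k[K]$-variable against the contra-action, $\Phi(\phi) = \theta_M \circ \widetilde{\phi}$, and define $\Psi \colon \Hom_k(k[N],M) \to \Hom_k(k[G],M)$ by precomposition with the projection, $\Psi(\psi) = \psi \circ p$. The contra-unit axiom for $M$ gives $\Phi \circ \Psi = \textup{id}$ at once. I would then check that $\Phi$ coequalises $f$ and $g$ — a direct manipulation using contra-associativity of $\theta_M$ together with the explicit form of $\rho$ found above — so that $\Phi$ descends to $\overline{\Phi} \colon \text{Cohom}_{k[K]}(k[G],M) \to \Hom_k(k[N],M)$. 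The substance of the vector-space statement is that $q \circ \Psi$ (with $q$ the quotient map) is a two-sided inverse of $\overline{\Phi}$, i.e. $\Psi\big(\Phi(\phi)\big) \equiv \phi$ modulo $\im(f-g)$ for all $\phi$. Conceptually this is the assertion that $k[G]$ is cofree as a left $k[K]$-comodule on the space $k[N]$ — realised by the shearing isomorphism $k[G] \cong k[K] \otimes k[N]$ built from the conjugation coaction and the antipode of $k[K]$ — together with the elementary identity $\text{Cohom}_{k[K]}(k[K] \otimes W, M) \cong \Hom_k(W,M)$ valid for any vector space $W$.

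It then remains to upgrade $\overline{\Phi}$ to an isomorphism of $k[G]$-contramodules. The left-hand side carries the structure it inherits as a quotient of the free contramodule $\Hom_k(k[G],M)$; on the right-hand side the claimed structure is, under $k[G] \cong k[N] \otimes k[K]$, free in the $k[N]$-direction (dual to translation by $N$) and diagonal in the $k[K]$-direction, the latter using the conjugation coaction on $k[N]$ and $\theta_M$ on $M$. I would verify that $q \circ \Psi$ intertwines the two $k[G]$-contra-actions by a Sweedler-notation computation of precisely the type carried out in the proof of Lemma~\ref{Hom-identity}: transporting the $k[G]$-contra-action through the isomorphism produces the comultiplication $\Delta_{k[G]}$, and the semidirect-product formula for $\Delta_{k[G]}$ rewrites its $N$-component via the conjugation coaction on $k[N]$ and the antipode of $k[K]$, which is exactly the diagonal action asserted.

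The main obstacle I anticipate is this final intertwining rather than the vector-space bijection. The book-keeping of the antipode and of the twisted comultiplication of $k[G]$ — where the conjugation of $K$ on $N$ is encoded — is delicate, and it is here that the stated comodule structure on $k[N]$ must be made to appear on the nose; a single misplaced antipode, or a left/right-translation convention, would destroy the compatibility. As in Lemma~\ref{Hom-identity}, the safest route is to establish the identity at the level of the tensorial maps $k[N] \otimes k[K] \to M$ first, invoking the contra-associativity of $M$ only at the very end when $\theta_M$ is finally applied.
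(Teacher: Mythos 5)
Your candidate isomorphism is the same as the paper's: under $(n,k)\mapsto nk$ one has $p=\iota^*$, so your $q\circ\Psi$ is precisely the paper's map $\psi\mapsto[\psi\circ\iota^*]$, and the division of labour into a linear-isomorphism step and an intertwining step also matches. However, your step for producing the inverse has a genuine gap: the contraction $\Phi(\phi)=\theta_M\circ\widetilde{\phi}$, taken in the coordinates $k[G]\cong k[N]\otimes k[K]$ coming from $(n,k)\mapsto nk$, does \emph{not} coequalise $f$ and $g$, so it does not descend to $\mathrm{Cohom}_{k[K]}(k[G],M)$. The obstruction is exactly the conjugation twist you yourself identify in $\rho$: writing $\Delta_{\mathrm{cong}}(n)=n_{(0)}\otimes n_{(1)}$ for the conjugation coaction on $n\in k[N]$, one computes
\[
\Phi\big(f(\xi)\big)(n)=\theta_M\Big(h\mapsto\xi\big(n_{(1)}h_{(1)}\otimes n_{(0)}\otimes h_{(2)}\big)\Big),
\qquad
\Phi\big(g(\xi)\big)(n)=\theta_M\Big(u\mapsto\xi\big(u_{(1)}\otimes n\otimes u_{(2)}\big)\Big),
\]
where the second equality is the contra-associativity manipulation you have in mind; these agree only when $\Delta_{\mathrm{cong}}$ is trivial, i.e.\ when the product is direct. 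A finite example makes the failure concrete: take $G=S_3=N\rtimes K$ with $N=\ZZ/3=\langle\tau\rangle$, $K=\ZZ/2=\langle\sigma\rangle$, so that $k[K]$-contramodules are $K$-representations with $\theta_M(\phi)=\sum_a\phi(\delta_a)\cdot a$. For $\xi=\delta_\sigma^*\otimes\delta_\tau^*\otimes\delta_1^*\otimes m_0$ one finds that $\Phi(f(\xi))$ is supported at $n=\sigma\tau\sigma^{-1}=\tau^2$ while $\Phi(g(\xi))$ is supported at $n=\tau$, so $\Phi\circ f\neq\Phi\circ g$ and $\Phi$ does not factor through the quotient.

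The repair is hiding in your own appeal to cofreeness, but it is attached to the \emph{other} decomposition of $G$: left translation by $K$ is concentrated on the $k[K]$-factor only in the sheared coordinates $K\times N\to G$, $(k,n)\mapsto kn$, and it is in those coordinates that $k[G]$ is literally the cofree left $k[K]$-comodule $k[K]\otimes k[N]$, so that $\mathrm{Cohom}_{k[K]}(k[K]\otimes W,M)\cong\Hom_k(W,M)$ applies (there $\iota^*=\varepsilon_{k[K]}\otimes\mathrm{id}$, and both inverse checks follow from contra-unity and contra-associativity). As written, your explicit $\Phi$ and the cofreeness statement you invoke live in two different coordinate systems, and only the latter works; the contraction must be defined through the shearing isomorphism, not through the raw $nk$-coordinates. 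Separately, the final intertwining step — which you correctly flag as the delicate one — is only sketched; note that the paper sidesteps the Sweedler book-keeping entirely by reducing the required identity to the equality of two morphisms of varieties $K\times G\to G$, both equal to $(k,g)\mapsto kg$, whose dual is $\Delta_R=(\mathrm{id}\otimes\mu)\circ\omega\circ(\mathrm{id}\otimes\Delta_{\mathrm{cong}})\circ\Delta_L$. That geometric reduction is worth adopting in place of the direct computation, since it eliminates exactly the antipode and convention hazards you anticipate.
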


\begin{proof}
To begin, it will serve us well to establish some notation. Let $\iota: N \to G$ denote the natural inclusion and $\iota^* : k[G] \to k[N]$ be the corresponding map of coordinate rings. Let $\Delta_R: k[G] \to k[G] \otimes k[K]$ denote the right $k[K]$-comodule structure on $k[G]$ corresponding to left multiplication $K \times G \to G$, similarly define $\Delta_L: k[G] \to k[K] \otimes k[G]$ corresponding to right multiplication of $K$ on $G$. Finally, let $\Delta_{\text{cong}}: k[N] \to k[N] \otimes k[K]$ denote the $k[K]$-comodule structure on $k[N]$ induced from conjugation $K \times N \to N; (k,n) \mapsto knk^{-1}$.

Recall that is vector spaces we have $Ind_{k[K]}^{k[G]}(M) \cong \text{Cohom}_{k[K]}(k[G],M)$, and one equips this space with contramodule structure by realising it as a quotient of the free contramodule on $M$. Now, consider the following map:
\begin{align*}\Hom_k(k[N],M) &\longrightarrow \text{Ind}_{k[K]}^{k[G]}(M)\\
\phi &\longmapsto [\phi \circ \iota^*] 
\end{align*}
where $[ \cdot ]$ denotes the equivalence class. This is clearly an isomorphism of vector spaces. So all that remains is to check that it preserves the $k[G]$-contramodule structure. We also observe that the crux of the proof lies in checking that the $k[K]$-contramodule structure (given by restriction) is preserved. Thus, consider the following diagram, which we wish to show commutes:

\[
 \begin{tikzcd}[scale cd=1]
{\Hom_k\big(k[K],\Hom_k(k[N],M)\big)} \arrow[dd] \arrow[rr]  & & {\Hom_k\big(k[K],\text{Ind}(M)\big)} \arrow[dd] \\
     & &       \\
    {\Hom_k(k[N],M)} \arrow[rr]  && \text{Ind}(M).
\end{tikzcd}\]
Let $k \mapsto (n \mapsto m(k,n)) \in \Hom_k\big(k[K],\Hom_k(k[N],M)\big)$. Travelling horizontally and then vertically gives $\Big[ g \longmapsto m\Big(\Delta_R(g)_{(1)},\,\iota^*\big(\Delta_R(g)_{(0)}\big)\Big)\Big] \in \text{Ind}(M)$. On the other hand, travelling vertically and then horizontally gives
\begin{align*}
&\bigg[g \longmapsto \theta_M\bigg(k \longmapsto m\Big(\Delta_{\text{cong}}(g)_{(1)} \cdot k,\iota^*(\Delta_{\text{cong}}(g)_{(0)})\Big)\bigg)\bigg] \\
\equiv \,\, &\bigg[ g \longmapsto m\bigg(\Delta_\text{cong}\Big(\Delta_L(g)_{(0)}\Big)_{(1)} \cdot\Delta_L(g)_{(-1)},\, \iota^*\bigg(\Delta_{\text{cong}}\Big(\Delta_L(g)_{(0)}\Big)_{(0)}\bigg)\bigg)\bigg] \in \text{Ind}(M)   
\end{align*}
Observe that is it now sufficient to show that \[\Delta_R(g) = \Delta_{\text{cong}}\Big(\Delta_L(g)_{(0)}\Big)_{(0)} \otimes \Delta_\text{cong}\Big(\Delta_L(g)_{(0)}\Big)_{(1)} \cdot\Delta_L(g)_{(-1)}\]
or more concisely that $\Delta_R \equiv (\text{id} \otimes \mu) \circ \omega \circ (\text{id} \otimes \Delta_{\text{cong}}) \circ \Delta_L$, where $\omega(x \otimes y \otimes z) = (y \otimes z \otimes x)$ and $(\text{id} \otimes \mu)(x \otimes y \otimes z) = x \otimes yz$.

However recall that $\Delta_R$ is the map of coordinate rings associated to $K \times G \to G; (k,g) \mapsto kg$ and the right hand side is the map corresponding to the composition 
\begin{align*}
K \times G \longrightarrow K \times K \times G \longrightarrow K \times G \times K \longrightarrow G \times K \to G \\
(k,g) \longmapsto (k,k,g) \longmapsto (k,g,k) \longmapsto (kgk^{-1},k) \longmapsto kg
\end{align*}
and so they are equal, as required.
\end{proof}

\section{Conditions on \texorpdfstring{$k[G]$}{k[G]} for the existence of proper mock projective contramodules}\label{Sec: conditions for proper mock projectives}

Let $G$ be an affine group scheme over an algebraically closed field $k$.Given a subgroup scheme $H$ of $G$, recall that we defined the induction functor $\text{Ind}_{k[H]}^{k[G]}(-): k[H]-\text{Contra} \longrightarrow k[G]-\text{Contra}$ as the underlying vector space $\text{Cohom}_{k[H]}(k[G],-)$ with $k[G]$-contramodule structure given by observing that $\text{Cohom}_{k[H]}(k[G],-)$ is a quotient of the free contramodule $\Hom_k(k[G],-)$. We say that $H$ is contra-exact in $G$ if $\text{Ind}_{k[H]}^{k[G]}(-)$ is an exact functor. One says that $H$ is exact in $G$ if the induction functor of modules is exact, it turns out that these are equivalent. \cite{johnston2023} Therefore we may drop the prefix $\lq\lq$ contra" and just say that $H$ is exact in $G$. As an example, all finite subgroup schemes of $G$ are exact.
Before giving our first result on conditions on $G$ for mock projective contramodules to exist we give a useful lemma, showing that restriction takes projective contramodules to projective contramodules, provided the subscheme is exact. 

\begin{lemma}\label{res proj to proj}
Let $H$ be exact in $G$. Then the restriction functor $\textup{Res}_{k[H]}^{k[G]}$ takes projective contramodules to projective contramodules.
\end{lemma}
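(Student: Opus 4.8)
The plan is to reduce to the case of a free $k[G]$-contramodule and then invoke Lemma \ref{inj co gives proj contra}. Recall that a $k[G]$-contramodule $P$ is projective if and only if it is a direct summand of a free contramodule $\Hom_k(k[G],V)$ for some vector space $V$. Since $\textup{Res}_{k[H]}^{k[G]}$ acts as the identity on underlying vector spaces and only alters the contra-action, it is an additive functor and so carries direct summands to direct summands. Consequently $\textup{Res}_{k[H]}^{k[G]}(P)$ is a direct summand of $\textup{Res}_{k[H]}^{k[G]}\big(\Hom_k(k[G],V)\big)$; as a direct summand of a projective contramodule is again projective, it suffices to prove the lemma in the case $P = \Hom_k(k[G],V)$.

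Next I would bring in the hypothesis of exactness. By the equivalence between exactness of comodule and contramodule induction \cite{johnston2023}, together with the standard criterion that $\textup{ind}_H^G$ is exact if and only if $k[G]$ is injective as a right $k[H]$-comodule, the assumption that $H$ is exact in $G$ yields that $k[G]$ is an injective right $k[H]$-comodule. Here the relevant coaction is $\Delta_{k[G]} = (\textup{id}\otimes \pi)\circ\Delta$, where $\pi : k[G]\to k[H]$ is the restriction of functions; this is precisely the comodule structure coming from right translation $G\times H\to G$. Applying Lemma \ref{inj co gives proj contra} with $M=k[G]$ and $C=k[H]$ then shows that $\Hom_k(k[G],V)$, equipped with the contra-action induced by $\Delta_{k[G]}$, is a projective $k[H]$-contramodule.

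The only substantive step that remains is to verify that this projective $k[H]$-contramodule structure on $\Hom_k(k[G],V)$ coincides with the one obtained by restricting the free $k[G]$-contramodule structure along $\pi$. Both are maps out of $\Hom_k\big(k[H],\Hom_k(k[G],V)\big)$, and I would compare them after applying the tensor-hom adjunction. The restricted free action is the free action precomposed with $\Hom_k(\pi,-)$, and since the free action uses the full comultiplication $\Delta:k[G]\to k[G]\otimes k[G]$, in Sweedler notation it sends $\psi$ to the map $h\mapsto \sum \psi\big(\pi(h_{(2)})\big)(h_{(1)})$. The action supplied by Lemma \ref{inj co gives proj contra} uses $\Delta_{k[G]}$ directly, and because $\Delta_{k[G]} = (\textup{id}\otimes\pi)\circ\Delta$ it produces the identical formula $h\mapsto \sum \psi\big(\pi(h_{(2)})\big)(h_{(1)})$. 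Hence the two contra-actions agree, the identification is complete, and projectivity of $\textup{Res}_{k[H]}^{k[G]}(P)$ follows from the first paragraph.

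I expect the main obstacle to be bookkeeping rather than conceptual: one must track the tensor-hom conventions carefully so that the two Sweedler expressions genuinely match on the nose, and one must confirm that the left/right comodule conventions in the cited injectivity criterion are exactly those compatible with the coaction $(\textup{id}\otimes\pi)\circ\Delta$ appearing in Lemma \ref{inj co gives proj contra}. Once these conventions are pinned down and the two contra-actions are identified, the conclusion is formal.
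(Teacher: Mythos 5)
Your proof is correct and follows essentially the same route as the paper: both arguments reduce to a free contramodule via additivity of restriction, use exactness of $H$ in $G$ to deduce that $k[G]$ is an injective right $k[H]$-comodule under the coaction $(\textup{id}\otimes\pi)\circ\Delta$, and then conclude that $\Hom_k(k[G],V)$ is a projective $k[H]$-contramodule --- you by citing Lemma \ref{inj co gives proj contra}, the paper by inlining that lemma's argument (splitting the coaction $k[G]\to k[G]\otimes k[H]$ and applying the additive functor $\Hom_k(-,B)$ to land in the free $k[H]$-contramodule $\Hom_k\big(k[H],\Hom_k(k[G],B)\big)$). Your explicit check that the comodule-induced contra-action on $\Hom_k(k[G],V)$ agrees with the restriction of the free $k[G]$-contra-action is a compatibility the paper uses silently, so including it is a small improvement rather than a deviation.
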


\begin{proof}
Let $B \in k[G]$-Contra be projective. Then $B$ is a direct summand of $\Hom_k(k[G],B)$, a free contramodule. Since restriction is an additive functor, $B$ is also a direct summand of $\Hom_k(k[G],B)$ as $k[H]$-contramodules. Since $H$ is exact in $G$, $k[G]$ is an injective $k[H]$ contramodule and so $k[G]$ is a direct summand of $k[G] \otimes k[H]$.

Now, the functor $\Hom_k(-,B): \textup{Comod}-k[H] \longrightarrow k[H]-\textup{Contra}$ is additive and thus we have that $\Hom_k(k[G],B)$ is a direct summand of $\Hom_k(k[G] \otimes k[H], B)$ as $k[H]$- contramodules. Combining both direct summand inclusions we have that $B$ is the direct summand of $\Hom_k(k[G] \otimes k[H],B) \cong \Hom_k\big(k[H],\Hom_k(k[G],B)\big)$,
where the latter is the free $k[H]$-contramodule on the vector space $\Hom_k(k[G],B)$. Thus $B$ is the direct summand of a free $k[H]$-contramodule and is therefore projective.
\end{proof}

\begin{proposition}\label{ind(k) projective when?}
Let $H$ be a finite subgroup scheme in $G$ with coordinate rings $k[H]$ and $k[G]$ respectively. Then:
\begin{enumerate}[label = \alph*)]
    \item $\textup{Ind}_{k[H]}^{k[G]}k$ is a projective $k[G]$-contramodule if and only if $k$ is a projective $k[H]$- contramodule.
    \item If the Frobenius map $F: G \longrightarrow G$ restricts to an automorphism of $H$, then $\textup{Ind}_{k[H]}^{k[G]}k$ is projective over $G_r$ for all $r>0$.
\end{enumerate}
\end{proposition}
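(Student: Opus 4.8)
The plan is to handle the two parts separately, leaning throughout on the adjunction ($\textup{Ind}_{k[H]}^{k[G]}$ left adjoint to $\textup{Res}_{k[H]}^{k[G]}$), on the fact that a finite subgroup scheme is exact, and on Lemma~\ref{res proj to proj}. For part~(a), the easy direction is ($\Leftarrow$): if $k$ is projective over $k[H]$ then $\Hom^{k[H]}(k,-)$ is exact, and since restriction is exact, the adjunction identification
\[ \Hom^{k[G]}\big(\textup{Ind}_{k[H]}^{k[G]}k,\,-\big) \cong \Hom^{k[H]}\big(k,\,\textup{Res}_{k[H]}^{k[G]}(-)\big) \]
exhibits $\Hom^{k[G]}(\textup{Ind}_{k[H]}^{k[G]}k,-)$ as a composite of exact functors, so $\textup{Ind}_{k[H]}^{k[G]}k$ is projective. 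For ($\Rightarrow$), assume $\textup{Ind}_{k[H]}^{k[G]}k$ is projective; since $H$ is finite, hence exact, Lemma~\ref{res proj to proj} makes $\textup{Res}_{k[H]}^{k[G]}\textup{Ind}_{k[H]}^{k[G]}k$ projective over $k[H]$, and it then suffices to realise $k$ as a direct summand of this contramodule. The unit of the adjunction gives a $k[H]$-map $k \to \textup{Res}_{k[H]}^{k[G]}\textup{Ind}_{k[H]}^{k[G]}k$, and I would split it by the retraction of $\text{Cohom}_{k[H]}(k[G],k)$ induced by the counit $\varepsilon\colon k[G]\to k$ (equivalently, by evaluation against the invariant constant $1\in k[G]$), which descends through the coequaliser defining $\text{Cohom}$ because $1$ is $H$-invariant. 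Checking that this retraction is a contramodule map splitting the unit is the one genuinely computational point in part~(a).

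For part~(b), first note that if $F$ restricts to an automorphism of $H$ then so does $F^r$, whence $H\cap G_r = \ker(F^r|_H) = 1$ for every $r$; moreover $G_r$ is normal in $G$. The target is the projectivity of $\textup{Res}_{k[G_r]}^{k[G]}\textup{Ind}_{k[H]}^{k[G]}k$, and my approach is a Mackey-type decomposition along the normal subgroup $G_r$:
\[ \textup{Res}_{k[G_r]}^{k[G]}\textup{Ind}_{k[H]}^{k[G]}k \;\cong\; \prod_{x\in G_r\backslash G/H} \textup{Ind}_{k[G_r\cap\, {}^{x}H]}^{k[G_r]}k. \]
Because $G_r$ is normal we have ${}^{x}G_r = G_r$, so $G_r\cap {}^{x}H = {}^{x}(G_r\cap H) = 1$ for each double coset; every factor is therefore $\textup{Ind}_{\{e\}}^{k[G_r]}k \cong \Hom_k(k[G_r],k)$, the free $k[G_r]$-contramodule on $k$. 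Since $\Hom_k(k[G_r],-)$ commutes with products, a product of such free contramodules is again free, and in particular projective, which yields the claim. Combined with part~(a), this is exactly the mechanism producing a proper mock projective whenever $H$ is not linearly reductive.

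The local building block of the displayed decomposition is Lemma~\ref{semi-direct-induction}: writing $G_rH = G_r\rtimes H$ (legitimate since $G_r\triangleleft G$ and $G_r\cap H = 1$) it gives $\textup{Ind}_{k[H]}^{k[G_rH]}k \cong \Hom_k(k[G_r],k)$, whose restriction to $G_r$ is the free contramodule appearing above; transitivity of induction then reduces the global statement to understanding $\textup{Res}_{k[G_r]}^{k[G]}\textup{Ind}_{k[G_rH]}^{k[G]}$. The main obstacle, and the step I expect to require real work rather than a citation, is establishing the Mackey-type isomorphism itself in the contramodule world: one must verify that the decomposition over $G_r\backslash G/H$ is genuinely a \emph{product} (the contra-dual of the coproduct that appears for comodules), that it is compatible with the handedness of the $k[H]$-comodule structure on $k[G]$ used to define induction, and that it is correctly interpreted geometrically, since $G_r\backslash G/H$ is positive-dimensional rather than a finite index set. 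I would attack this through the coequaliser presentation of $\text{Cohom}$ together with the normality of $G_r$.
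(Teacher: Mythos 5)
Your part (a) is correct, and its ``only if'' direction takes a genuinely different and more elementary route than the paper. The easy direction coincides with the paper's (a left adjoint of an exact functor preserves projectives). For $(\Rightarrow)$, the paper instead builds a ``generalised Frobenius reciprocity'' $\Ext^n_{k[G]\text{-Contra}}\big(\Ind_{k[H]}^{k[G]}W,V\big) \cong \Ext^n_{k[H]\text{-Contra}}(W,V)$ via the Grothendieck spectral sequence, and then uses the equivalence between $k[H]$-contramodules and $H$-modules together with the theory of cohomological support varieties to conclude that $\Ext^{>0}(k,k)=0$ forces $k$ to be projective. Your argument --- restrict the projective $\Ind_{k[H]}^{k[G]}k$ back to $k[H]$ (Lemma \ref{res proj to proj}, legitimate since finite subgroup schemes are exact) and split off $k$ via the unit $b \mapsto [\varepsilon(-)b]$ and the evaluation-at-$1$ retraction --- avoids both the spectral sequence and the support-variety input entirely. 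The splitting does check out: $\mathrm{ev}_1$ descends through the coequaliser because $\Delta(1)=1\otimes 1$, it is a contramodule map, and it composes with the unit to the identity. This is a real simplification; the only thing you lose is the degreewise $\Ext$-isomorphism, which the paper re-uses in the proposition that follows.

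Part (b) contains a genuine gap. The reduction to $G_rH \cong G_r \rtimes H$ (using $H \cap G_r = \ker(F^r|_H) = 1$) and the identification $\Ind_{k[H]}^{k[G_rH]}k \cong \Hom_k(k[G_r],k)$ from Lemma \ref{semi-direct-induction} are fine, but everything then rests on the Mackey-type isomorphism $\Res_{k[G_r]}^{k[G]}\Ind_{k[H]}^{k[G]}k \cong \prod_{x \in G_r\backslash G/H}\Ind_{k[G_r\cap\,{}^xH]}^{k[G_r]}k$, which you defer as ``real work.'' This is not a deferrable verification: as stated the formula is not even well-formed, since $G_r\backslash G/H$ is a positive-dimensional quotient scheme (both $G_r$ and $H$ are finite), so a product indexed by its points is the wrong object. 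The correct contra-analog would have to be indexed by a basis of the coordinate ring of the quotient, and proving such a decomposition amounts to a normal-basis/cleftness statement for $k[G]$ over the invariants of $G_rH$ that is neither in the paper nor standard at this level of generality. The paper sidesteps Mackey theory altogether: take an injective $k[G]$-comodule $I$; since $F$ restricts to an automorphism of $H$, the Frobenius twist $I^{(r)}$ remains injective over $k[H]$, so $\Hom_k\big(I^{(r)},k\big)$ is a projective $k[H]$-contramodule (Lemma \ref{inj co gives proj contra}); the Hom identity (Lemma \ref{Hom-identity}) gives $\Hom_k\big(I^{(r)},\Ind_{k[H]}^{k[G]}k\big) \cong \Ind_{k[H]}^{k[G]}\big(\Hom_k(I^{(r)},k)\big)$, which is projective over $k[G]$, hence over $k[G_r]$ by exactness of $G_r$; finally $I^{(r)}$ is trivial as a $k[G_r]$-comodule, so over $k[G_r]$ this contramodule is just $\prod^{\dim I}\Ind_{k[H]}^{k[G]}k$, exhibiting $\Ind_{k[H]}^{k[G]}k$ as a direct factor of a projective. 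You should replace your Mackey step with this Frobenius-twist argument (or else prove the coordinate-ring-indexed decomposition, which is substantially harder than the proposition itself).
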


\begin{proof}
We first prove part $a)$. For the if direction, simply observe that since restriction takes epimorphisms to epimorphisms, induction takes projective contramodules to projective contramodules. To prove the only if direction we wish for some sort of $\lq\lq$generalised Frobenius reciprocity" for contramodules, we develop this via the Grothendieck spectral sequence. \cite[Section I.4.1]{jantzen2003representations} \cite[Thm XX.9.6]{lang2012algebra}.

Observe that for any $k[G]$-contramodule $V$, the adjunction between induction and restriction may be viewed as an isomorphism of functors
\[ \Hom^{k[G]}(-,V) \circ \textup{Ind}_{k[H]}^{k[G]}\,^{\text{op}}(-) \,\cong\, \Hom^{k[H]}\big(-,V|_{k[H]}\big),\]
where notably we have $\textup{Ind}_{k[H]}^{k[G]}\,^{\text{op}}: k[H]$-Contra$^{\text{op}} \longrightarrow k[G]$-Contra$^{\text{op}}$. Since contramodule categories have enough projectives, opposite contramodule categories have enough injectives. Furthermore, $\textup{Ind}_{k[H]}^{k[G]}\,^{\text{op}}$ is exact since $H$ is a finite subgroup scheme of $G$. One checks that all other requirements to apply Grothendieck's spectral sequence (specifically special case $(2)$ of \cite[Prop I.4.1]{jantzen2003representations}) are satisfied, and so we have an isomorphism

\[ \Ext_{k[G]-\text{Contra}}^n\Big(\textup{Ind}_{k[H]}^{k[G]}W,V\Big) \,\,\cong\,\, \Ext_{k[H]-\text{Contra}}^n\big(W,V\big) \]
for each $V \in k[G]$-Contra, $W \in k[H]$-Contra. In particular for $V = W = k$ and $n>0$ we have
\[ \Ext_{k[H]-\text{Contra}}^n\big(k,k\big) = 0 \text{ for all }n > 0\]
since $\textup{Ind}_{k[H]}^{k[G]}k$ is projective by assumption. Finally, since $H$ is finite, we have an equivalence of categories between $k[H]$-Contra and Mod-$H$, the category of right $H$ modules. Now, by the theory of cohomological support varieties we have that $k$ is a projective $H$-Mod, and thus a projective $k[H]$-contramodule. \cite[Theorem 5.6 (5)] {friedlander2005representation}
For part b$)$, let $I$ be an injective $k[G]$-comodule, since $H$ is exact in $G$ the restriction of $I$ to $k[H]$ is an injective $k[H]$-comodule. \cite[Prop 2.1]{cline1977induced} If the Frobenius morphism $F$ restricts to automorphism of $H$, then $I^{(r)}$ is also an injective $k[H]$-comodule for any $r >0$. Therefore $\Hom_k\big(I^{(r)},k\big)$ is a projective $k[H]$-contramodule by Lemma \ref{inj co gives proj contra}. Furthermore, by Lemma \ref{Hom-identity} we have
\[ \Hom_k\Big(I^{(r)},\textup{Ind}_{k[H]}^{k[G]}k\Big) \,\,\cong \,\, \textup{Ind}_{k[H]}^{k[G]}\Big(\Hom_k\big(I^{(r)},k\big)\Big).\]
Since induction takes projective objects to projective objects, the right hand side (and therefore the left hand side) is a projective $k[G]$-contramodule. Furthermore, since $G_r$ is exact in $G$, restriction takes projective objects to projective objects, and so the restriction of the left hand side is a projective $k[G_r]$-contramodule. But now, as a $k[G_r]$-contramodule we have
\[ \Hom_k\Big(I^{(r)},\textup{Ind}_{k[H]}^{k[G]}k\Big) \,\, \cong \,\, \Hom_k\bigg(\bigoplus_{i=1}^{\dim(I)}k,\textup{Ind}_{k[H]}^{k[G]}k\bigg) \,\, \cong \,\, \prod_{i=1}^{\dim(I)} \textup{Ind}_{k[H]}^{k[G]}k. \]
Thus, as $k[G_r]$-contramodules, $ \textup{Ind}_{k[H]}^{k[G]}k$ is a direct summand of a projective contramodule, and so is itself projective.
\end{proof}

\begin{proposition}
Let $H$ be a finite subgroup scheme of $G$ for which every simple $k[H]$- contramodule is the restriction of a $k[H]$-contramodule. Then for any right $k[G]$-comodule $M$, $\Hom_k\big(M,\textup{Ind}_{k[H]}^{k[G}k\big)$ is a projective $k[G]$-contramodule if and only if $\Hom_k(M,k)$ is a projective $k[H]$- contramodule. 
\end{proposition}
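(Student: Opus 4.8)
The plan is to transport the question through the Hom identity into a statement about the induction functor, and then to detect projectivity by the vanishing of $\Ext^1$ against simple contramodules; the hypothesis on $H$ is precisely what is needed to upgrade the relative-$\Ext$ isomorphism of Proposition~\ref{ind(k) projective when?} into a statement about all simples.

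First I would apply Lemma~\ref{Hom-identity} to identify $\Hom_k(M,\textup{Ind}_{k[H]}^{k[G]}k) \cong \textup{Ind}_{k[H]}^{k[G]}(\Hom_k(M,k))$, and set $W := \Hom_k(M,k)$, regarded as a $k[H]$-contramodule. The assertion then becomes the equivalence: $\textup{Ind}_{k[H]}^{k[G]}(W)$ is projective over $k[G]$ if and only if $W$ is projective over $k[H]$. The ``if'' direction is immediate and is exactly the observation already used in Proposition~\ref{ind(k) projective when?}: restriction is exact, so its left adjoint $\textup{Ind}_{k[H]}^{k[G]}$ carries projectives to projectives.

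For the converse, suppose $\textup{Ind}_{k[H]}^{k[G]}(W)$ is projective. I would reuse the isomorphism established inside the proof of Proposition~\ref{ind(k) projective when?}, namely $\Ext^n_{k[G]}(\textup{Ind}_{k[H]}^{k[G]}W, V) \cong \Ext^n_{k[H]}(W, V|_{k[H]})$ for every $V \in k[G]$-Contra and $n>0$. Projectivity of $\textup{Ind}_{k[H]}^{k[G]}(W)$ annihilates the left-hand side, so $\Ext^1_{k[H]}(W, V|_{k[H]}) = 0$ for all such $V$. The hypothesis that every simple $k[H]$-contramodule $S$ arises as a restriction $V|_{k[H]}$ of some $k[G]$-contramodule then yields $\Ext^1_{k[H]}(W,S)=0$ for every simple $S$.

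It remains to deduce projectivity of $W$ from this vanishing, and this is where I expect the only genuine difficulty, precisely because $W=\Hom_k(M,k)$ need not be finite-dimensional. Working through the equivalence between $k[H]$-Contra and $\textup{Mod}$-$H$, we may treat $W$ as a module over the finite-dimensional algebra $A = k[H]^\ast$. Such an algebra is Artinian, hence left perfect, so $W$ admits a projective cover $0 \to K \to P \to W \to 0$ with $K \subseteq \operatorname{rad}(P) = JP$, where $J = \operatorname{rad}(A)$. By d\'evissage the vanishing of $\Ext^1_{k[H]}(W,-)$ against all simples extends to vanishing against the semisimple top $K/JK$; pushing the projective-cover sequence out along the quotient $K \twoheadrightarrow K/JK$ produces a split extension of $W$ by $K/JK$, and composing the induced map $P \to E$ with the retraction $E \to K/JK$ gives $\sigma : P \to K/JK$ whose restriction to $K$ is the quotient map. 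Since $K/JK$ is killed by $J$, the map $\sigma$ vanishes on $JP \supseteq K$, forcing $K/JK = 0$ and hence $K=0$ by Nakayama. Thus $W$ is projective, which establishes the converse and completes the argument.
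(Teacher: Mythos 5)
Your proposal follows exactly the paper's route: identify $\Hom_k\big(M,\textup{Ind}_{k[H]}^{k[G]}k\big)$ with $\textup{Ind}_{k[H]}^{k[G]}\big(\Hom_k(M,k)\big)$ via Lemma \ref{Hom-identity}, obtain the ``if'' direction from the fact that induction preserves projectives, and for the converse invoke the Grothendieck spectral sequence isomorphism $\Ext^n_{k[G]\text{-Contra}}\big(\textup{Ind}_{k[H]}^{k[G]}W,V\big) \cong \Ext^n_{k[H]\text{-Contra}}\big(W,V|_{k[H]}\big)$ together with the hypothesis that every simple $k[H]$-contramodule is a restriction. The paper stops at that point, asserting that one ``immediately concludes'' projectivity of $W = \Hom_k(M,k)$; you go further and supply the missing argument that $\Ext^1$-vanishing against all simples forces projectivity, passing to modules over the finite-dimensional algebra $A = k[H]^*$ and using projective covers. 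Supplying that step is worthwhile --- and it is precisely there that your write-up has a gap.

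The unjustified step is the ``d\'evissage'': knowing $\Ext^1_A(W,S)=0$ for every simple $S$ does not formally give $\Ext^1_A\big(W,K/JK\big)=0$, because $K/JK$ may be an \emph{infinite} direct sum of simples when $W$ is infinite-dimensional (which, as you yourself note, is the case of interest), and $\Ext^1_A(W,-)$ commutes with direct products, not with infinite direct sums; d\'evissage is a finite-length induction and does not apply. Fortunately the repair is short, in either of two ways. (i) Instead of pushing out along $K \twoheadrightarrow K/JK$, push out along $K \twoheadrightarrow S$ for a \emph{single} simple quotient $S$ of $K$; such a quotient exists whenever $K \neq 0$, since $J$ is nilpotent, so $K/JK \neq 0$ is a nonzero semisimple module. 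Your splitting-plus-radical argument then shows the composite $\sigma: P \to S$ restricts on $K$ to the surjection $K \twoheadrightarrow S$ while also vanishing on $JP \supseteq K$, a contradiction; hence $K = 0$ directly, with no appeal to Nakayama. (ii) Alternatively, keep $K/JK$ but observe it is a module over the semisimple algebra $A/J$, so the inclusion $K/JK = \bigoplus_i S_i \hookrightarrow \prod_i S_i$ splits ($A/J$-linearly, hence $A$-linearly); therefore $\Ext^1_A\big(W,K/JK\big)$ is a direct summand of $\prod_i \Ext^1_A(W,S_i) = 0$. With either patch your argument is complete and coincides in substance with the paper's, while making explicit a step the paper leaves to the reader.
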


\begin{proof}
If $\Hom_k(M,k)$ is a projective $k[H]$-contramodule, then by Lemma \ref{Hom-identity} we have that $\textup{Ind}_{k[H]}^{k[G]}\big(\Hom_k(M,k)\big) \,\, \cong \,\, \Hom_k\big(M,\textup{Ind}_{k[H]}^{k[G]}k\big)$ is a projective $k[G]$-contramodule.
Conversely, using the Grothendieck spectral sequence, as seen in the proof of Lemma \ref{Hom-identity}, we have that for any $k[G]$-contramodule $B$:
\[\Ext_{k[G]-\text{Contra}}^n\Big(\textup{Ind}_{k[H]}^{k[G]}\big(\Hom_k(M,k)\big),B\Big) \,\,\cong\,\, \Ext_{k[H]-\text{Contra}}^n\big(\Hom_k(M,k),B\big). \]
Since every simple $k[H]$-contramodule comes from a $k[G]$-contramodule, we immediately conclude that $\Hom_k(M,k)$ is projective if $\textup{Ind}_{k[H]}^{k[G]}\big(\Hom_k(M,k)\big)$ is.
\end{proof}

We may now describe conditions on an algebraic group scheme for it to have proper mock projective contramodules. In order to assist with the proof, we first give the following lemma.

\begin{lemma}\cite[Appendix A]{positselski2010homological}\label{contras_over_decomp_coalgebras}
Let $C$ be a coalgebra which is the direct sum of a family of coalgebras $C_\alpha$. Then
any left contramodule $B$ over $C$ is the product of a uniquely defined family of left
contramodules $B_\alpha$ over $C_\alpha$    
\end{lemma}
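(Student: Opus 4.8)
The plan is to realize each summand $B_\alpha$ as the image of an idempotent operator on $B$, and then to recover $B$ from the family $(B_\alpha)_\alpha$ by using the contra-action to perform the ``infinite summation'' that a product of contramodules requires. First I would record the decomposition $\Hom_k(C,B) \cong \prod_\alpha \Hom_k(C_\alpha,B)$ coming from $C = \bigoplus_\alpha C_\alpha$, and introduce the functionals $e_\alpha \in \Hom_k(C,k)$ given by the counit $\varepsilon_\alpha$ of $C_\alpha$ extended by zero off $C_\alpha$. Since $\Delta(C_\gamma) \subseteq C_\gamma \otimes C_\gamma$ and each $c \in C$ has only finitely many nonzero components, these are pairwise orthogonal idempotents in the convolution algebra $\Hom_k(C,k)$ with $\sum_\alpha e_\alpha = \varepsilon$ (the sum being finite when evaluated on any fixed $c$). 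Any contramodule $B$ carries a $\Hom_k(C,k)$-module structure via $f \cdot b \defeq \theta_B(c \mapsto f(c)b)$, and contra-unity gives $\varepsilon \cdot b = b$; hence each $e_\alpha$ acts as an idempotent operator on $B$, and I set $B_\alpha \defeq e_\alpha \cdot B$.

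The engine of the argument is the identity
\[ e_\alpha \cdot \theta_B(\xi) = \theta_B(\xi_\alpha), \qquad \xi \in \Hom_k(C,B), \]
where $\xi_\alpha$ denotes the component of $\xi$ supported on $C_\alpha$ and zero elsewhere. I would prove this by applying contra-associativity to the element $c \mapsto e_\alpha(c)\,\xi$ of $\Hom_k\big(C,\Hom_k(C,B)\big)$: the two routes around the defining square equate $e_\alpha \cdot \theta_B(\xi)$ with $\theta_B\big(c \mapsto \sum e_\alpha(c_{(2)})\xi(c_{(1)})\big)$, and the counit axiom inside $C_\alpha$ (legitimate because $\Delta(C_\alpha) \subseteq C_\alpha \otimes C_\alpha$) collapses the inner map to $\xi_\alpha$. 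Taking $\xi = \xi_\alpha$ shows $\theta_B(\xi_\alpha) \in B_\alpha$, so $B_\alpha$ acquires a $C_\alpha$-contra-action: one extends $\psi \colon C_\alpha \to B_\alpha$ by zero and applies $\theta_B$. Contra-unity for $B_\alpha$ is exactly the statement that $e_\alpha$ acts as the identity on $B_\alpha$, and contra-associativity is inherited from $B$ since $\Delta$ preserves $C_\alpha$.

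Finally I would exhibit the isomorphism $B \cong \prod_\alpha B_\alpha$. Define $\Phi(b) = (e_\alpha \cdot b)_\alpha$ and, for a family $(b_\alpha)_\alpha$, define $\Psi\big((b_\alpha)\big) = \theta_B(\phi)$ where $\phi(c) = \sum_\gamma e_\gamma(c)\,b_\gamma$; this $\phi$ is a well-defined linear map precisely because for each fixed $c$ only finitely many $e_\gamma(c)$ are nonzero. The relation $\Phi \circ \Psi = \mathrm{id}$ is immediate from the engine identity, since $e_\beta \cdot \theta_B(\phi) = \theta_B(\phi_\beta) = e_\beta \cdot b_\beta = b_\beta$. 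The substance lies in $\Psi \circ \Phi = \mathrm{id}$: writing $\phi(c) = \sum_\gamma e_\gamma(c)(e_\gamma \cdot b)$ and invoking contra-associativity once more, together with $e_\gamma \ast e_\gamma = e_\gamma$ and $\sum_\gamma e_\gamma = \varepsilon$, collapses the doubled $\theta_B$ to $\theta_B(c \mapsto \varepsilon(c) b) = b$. Uniqueness of the family is forced, as $B_\alpha$ must coincide with $e_\alpha \cdot B$.

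I expect the main obstacle to be conceptual rather than computational: the decomposition is a \emph{product}, not a direct sum, precisely because reconstructing $b$ from its components $(e_\alpha \cdot b)_\alpha$ requires summing a possibly infinite family, an operation unavailable in a bare vector space but supplied by the contra-action $\theta_B$. The delicate bookkeeping is to keep track of which summations are genuinely finite---those of the form $\sum_\gamma e_\gamma(c)(\cdots)$ for fixed $c$, legitimized by $C$ being a direct sum---and which are carried out ``transfinitely'' by $\theta_B$; getting this distinction right is what makes $\Psi$ well-defined and the two composites evaluate correctly.
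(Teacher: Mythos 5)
The paper offers no proof of this lemma at all---it simply cites \cite[Appendix A]{positselski2010homological}---and your argument is correct and is essentially the argument found in that source: the counits $\varepsilon_\alpha$ extended by zero give orthogonal idempotents $e_\alpha$ in the dual algebra acting on $B$ through $\theta_B$, the identity $e_\alpha \cdot \theta_B(\xi) = \theta_B(\xi_\alpha)$ (valid precisely because $\Delta(C_\alpha) \subseteq C_\alpha \otimes C_\alpha$) makes each $B_\alpha = e_\alpha \cdot B$ a $C_\alpha$-contramodule, and $\theta_B$ itself performs the infinite summation needed to invert $b \mapsto (e_\alpha \cdot b)_\alpha$. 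The only step you leave implicit is that your mutually inverse linear maps $\Phi$ and $\Psi$ are morphisms of $C$-contramodules, where each $B_\alpha$ is regarded as a $C$-contramodule via the coalgebra inclusion $C_\alpha \hookrightarrow C$ and the product is taken in $C$-Contra; this is part of what the statement asserts, but it follows from one more application of your engine identity, so it is a routine omission rather than a gap.
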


In particular, if $C$ is cosemisimple, then any contramodule over $C$ is the direct product of simple contramodules. With this fact in hand, we may now state and prove our theorem.

\begin{theorem}\label{conditions for proper mocks}
Let $G$ be an affine algebraic group scheme over a field $k$ which is defined and split over a finite subfield $\mathbb{F}_q \subset k$. Then the following are equivalent:\begin{enumerate}[label=\roman*)]
    \item $k[G]$ has proper mock projective contramodules
    \item $G$ has proper mock injective modules
    \item either $G^0$ is not a torus or $G/G^0$ has order divisible by $p$.
\end{enumerate} 
\end{theorem}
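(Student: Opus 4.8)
The plan is to prove the equivalence (i)$\iff$(iii) directly and to inherit (ii)$\iff$(iii) from the main theorem of Hardesty, Nakano and Sobaje \cite{hardesty2017existence}; chaining the two then yields the full statement. The organising principle is Nagata's theorem: the failure of (iii) is exactly the condition that $G$ be linearly reductive, i.e. that $G^0$ be a torus and $p \nmid [G:G^0]$. This dichotomy lets me treat the two implications of (i)$\iff$(iii) completely separately.

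For (i)$\Rightarrow$(iii) I would argue by contraposition. If (iii) fails then $G$ is linearly reductive, so its comodule category is semisimple and $k[G]$ is a cosemisimple coalgebra; write $k[G] = \bigoplus_\alpha C_\alpha$ as a direct sum of cosimple (matrix) coalgebras. By Lemma \ref{contras_over_decomp_coalgebras} every $k[G]$-contramodule $B$ decomposes as a product $\prod_\alpha B_\alpha$ of contramodules over the $C_\alpha$. Each $B_\alpha$ is projective, since a finite-dimensional cosimple coalgebra is the dual of a simple algebra over which every module is projective. Moreover, because the free contramodule on a vector space $V$ satisfies $\Hom_k\big(\bigoplus_\alpha C_\alpha, V\big) \cong \prod_\alpha \Hom_k(C_\alpha, V)$, a product of projective $C_\alpha$-contramodules is a direct summand of a free $k[G]$-contramodule (take $V = \bigoplus_\alpha V_\alpha$ and split off each factor), hence projective. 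Thus every $k[G]$-contramodule is projective, so no mock projective contramodule can be proper and (i) fails.

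For (iii)$\Rightarrow$(i) I would exhibit an explicit proper mock projective contramodule of the form $\textup{Ind}_{k[H]}^{k[G]}k$. Proposition \ref{ind(k) projective when?} reduces this to producing a finite subgroup scheme $H \le G$ with two properties: $F$ restricts to an automorphism of $H$, so that part (b) makes $\textup{Ind}_{k[H]}^{k[G]}k$ projective over every $G_r$ (mock projectivity); and $k$ is not projective as a $k[H]$-contramodule, so that part (a) forces $\textup{Ind}_{k[H]}^{k[G]}k$ to be non-projective over $k[G]$ (properness). A finite subgroup scheme is étale exactly when Frobenius restricts to an automorphism, and for an étale $H$ arising from the $\mathbb{F}_q$-structure this holds automatically; for such an $H$, the trivial contramodule $k$ fails to be projective precisely when $p \mid |H|$ (Maschke, under the equivalence with $H$-modules). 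It therefore suffices to produce, under (iii), a Frobenius-stable finite étale subgroup of $G$ of order divisible by $p$. When $G^0$ is not a torus it contains a root or unipotent-radical subgroup $\mathbb{G}_a$ defined over $\mathbb{F}_q$, and $H = \mathbb{G}_a(\mathbb{F}_q) \cong (\mathbb{Z}/p)^{[\mathbb{F}_q:\mathbb{F}_p]}$ does the job; when instead $p \mid [G:G^0]$, the component group $\pi_0(G)$ contains a subgroup of order $p$ which must be lifted to a finite étale subgroup scheme of $G$.

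I expect the main obstacle to be exactly this last lifting step: extracting a genuinely Frobenius-stable finite étale $p$-subgroup from the hypothesis $p \mid [G:G^0]$, since a naive set-theoretic lift of an order-$p$ element of $\pi_0(G)$ need not generate a finite group and need not be defined over $\mathbb{F}_q$. This is precisely the delicate point already resolved in \cite{hardesty2017existence}, and I would import their subgroup construction directly rather than reprove it. Everything else in the argument is formal, assembled from Proposition \ref{ind(k) projective when?}, Lemma \ref{contras_over_decomp_coalgebras}, and Nagata's characterisation of linear reductivity.
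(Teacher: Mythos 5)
Your skeleton is the paper's: (ii)$\iff$(iii) is quoted from \cite[Theorem 2.2.1]{hardesty2017existence}, the direction not-(iii)$\Rightarrow$not-(i) goes through cosemisimplicity of $k[G]$ (Nagata) and Lemma \ref{contras_over_decomp_coalgebras}, and (iii)$\Rightarrow$(i) feeds a Frobenius-stable finite subgroup scheme $H$ with $k$ non-projective over $k[H]$ into Proposition \ref{ind(k) projective when?}. On the cosemisimple side you are actually more careful than the paper: where the paper merely asserts that a product of simple contramodules is projective, your observation that $\Hom_k\big(\bigoplus_\alpha C_\alpha,V\big)\cong\prod_\alpha\Hom_k(C_\alpha,V)$ exhibits such a product as a direct summand of a free contramodule is the right way to make that step airtight (it is exactly the point that makes arbitrary products of projectives projective in contramodule categories, unlike in module categories).

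The divergence, and the one genuine gap, is your choice of $H$. Your two-case construction (a root or unipotent $\mathbb{G}_a(\mathbb{F}_q)$ when $G^0$ is not a torus; a lifted order-$p$ subgroup of $\pi_0(G)$ when $p\mid[G:G^0]$) leaves the second case unproved, and the ``subgroup construction'' you propose to import from \cite{hardesty2017existence} does not exist there in that form: neither Hardesty--Nakano--Sobaje nor this paper ever lifts anything from the component group. Both take, uniformly, $H=G(\mathbb{F}_q)$, the full finite group of rational points. Frobenius automatically restricts to an automorphism of $G(\mathbb{F}_q)$, and condition (iii) forces $p\mid|G(\mathbb{F}_q)|$: if $G^0$ is not a torus then $G^0(\mathbb{F}_q)$ already contains a copy of $\mathbb{G}_a(\mathbb{F}_q)$, of order $q$; while if $G^0$ is a torus and $p\mid|G/G^0|$, then Lang's theorem applied to the connected group $G^0$ makes $G(\mathbb{F}_q)\twoheadrightarrow(G/G^0)(\mathbb{F}_q)=G/G^0$ surjective (the component group being constant over $\mathbb{F}_q$ by the splitness hypothesis), so again $p\mid|G(\mathbb{F}_q)|$. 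Maschke, under the equivalence of $k[G(\mathbb{F}_q)]$-contramodules with modules over the finite group scheme, then gives non-projectivity of $k$, and both parts of Proposition \ref{ind(k) projective when?} apply to $\textup{Ind}_{k[G(\mathbb{F}_q)]}^{k[G]}k$, with no case analysis and no lifting problem. In short, the obstacle you flag as the main difficulty is self-inflicted: it dissolves once $H$ is taken to be all of $G(\mathbb{F}_q)$ rather than a minimal $p$-subgroup, and that is precisely how the paper's proof runs.
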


\begin{proof}
The analogous result of Hardesty, Nakano, and Sobaje gives $ii) \iff iii)$. \cite[Theorem 2.2.1]{hardesty2017existence}. To show $i) \iff iii)$ we use a similar proof technique. If $p$ does not divide the order of $G(\mathbb{F}_q)$, then $G^0$ is a torus and $G/G^0$ is a finite group of order not divisible by $p$. Thus, every element of $G$ is semisimple \cite[Theorem 2]{nagata1961complete} and so $k[G]$ is cosemisimple. Therefore, by Lemma \ref{contras_over_decomp_coalgebras} every contramodule is a direct product of simple contramodules. Thus every $k[G]$-contramodule is projective since all maps of contramodules are given as products of maps between the simple constituents, and so there cannot be any proper mock projective contramodules.
On the other hand, if $p$ divides the order of $G(\mathbb{F}_q)$ then $k$ is a non-projective $k[G(\mathbb{F}_q)]$-contramodule. Thus by Proposition \ref{ind(k) projective when?}, $\textup{Ind}_{k[H]}^{k[G]}k$ is a non-projective $k[G]$-contramodule whilst being projective as a contramodule over $k[G_r]$ for all $r > 0$.
\end{proof}

We now produce a family of non-projective $k[G]$-contramodules which are projective with respect to the fixed point subgroups of powers of the Frobenius map.

\begin{proposition}
Let $G$ be an affine algebraic group defined over $\mathbb{F}_p$. Let $P$ be a projective $k[G]$-contramodule. Then $P^{(r)}$, $r>0$, is projective as a $k[G(\mathbb{F}_q)]$-contramodule, $q = p^s$ for all large enough $s$, but is not projective as a $k[G]$-contramodule.
\end{proposition}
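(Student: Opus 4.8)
The plan is to establish the two assertions independently: projectivity of $P^{(r)}$ over each finite group of rational points $G(\mathbb{F}_q)$, and its failure of projectivity over $k[G]$. Throughout I use that the Frobenius twist $(-)^{(r)}$ is the pullback of the contra-action along $F^r : G \to G$, and that both $G(\mathbb{F}_q)$ and the Frobenius kernel $G_r = \ker(F^r)$ are \emph{finite} subgroup schemes of $G$, hence exact in $G$; thus Lemma \ref{res proj to proj} applies to restriction along either inclusion. I also use the compatibility $P^{(r)}\big|_H \cong (F^r|_H)^\ast\big(P|_H\big)$ valid for any subgroup scheme $H$ that $F^r$ carries into itself, which follows from $F^r \circ \iota_H = \iota_H \circ (F^r|_H)$.

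For the projectivity, fix $r>0$ and put $q = p^s$. Since $G$ is defined over $\mathbb{F}_p$, the morphism $F$ preserves $G(\mathbb{F}_q)$ and $F^s$ acts as $\text{id}$ on the finite group $G(\mathbb{F}_q)$; hence $F$, and so $F^r$, restricts to a group automorphism $\sigma$ of $G(\mathbb{F}_q)$ (it is a bijective endomorphism of a finite group). By the compatibility above, $P^{(r)}\big|_{G(\mathbb{F}_q)} \cong \sigma^\ast\big(P|_{G(\mathbb{F}_q)}\big)$. Now $P|_{G(\mathbb{F}_q)}$ is a projective $k[G(\mathbb{F}_q)]$-contramodule by Lemma \ref{res proj to proj}, and under the equivalence between $k[G(\mathbb{F}_q)]$-Contra and finite-group modules used in Proposition \ref{ind(k) projective when?}, the automorphism $\sigma$ induces an auto-equivalence of the category, which preserves projectivity. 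Hence $P^{(r)}\big|_{G(\mathbb{F}_q)}$ is projective. This argument is valid for every $s \geq 1$, in particular for all large enough $s$.

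For the failure over $k[G]$, restrict instead to $G_r = \ker(F^r)$. The $G_r$-action on $P^{(r)}$ factors through $F^r(G_r) = \{e\}$, so $P^{(r)}\big|_{G_r}$ is the trivial contramodule on the (nonzero) underlying space of $P$. If $P^{(r)}$ were projective over $k[G]$, then Lemma \ref{res proj to proj} would force $P^{(r)}\big|_{G_r}$ to be projective over $k[G_r]$. Under the equivalence of $k[G_r]$-Contra with modules over the finite-dimensional dual algebra $k[G_r]^\ast$, the trivial contramodule corresponds to a nonzero module annihilated by the augmentation ideal, i.e. a direct sum of copies of the trivial simple module. As in the proof of Proposition \ref{ind(k) projective when?}, cohomological support theory shows that the trivial module is projective over $k[G_r]$ precisely when $G_r$ is linearly reductive, equivalently when $G^0$ is a torus. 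Invoking the hypothesis that $G^0$ is not a torus, we obtain a contradiction, so $P^{(r)}$ is not projective over $k[G]$.

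The main obstacle is the non-projectivity step, and specifically the need to pin down exactly when the trivial $k[G_r]$-contramodule fails to be projective. This fails precisely when $G^0$ is a torus (so that the Frobenius kernels are of multiplicative type), which is why the conclusion requires $G$ to lie in the case ``$G^0$ is not a torus'' of Theorem \ref{conditions for proper mocks}(iii); the complementary case $p \mid |G/G^0|$ is not witnessed by this Frobenius-twist family but by the induced contramodules $\textup{Ind}_{k[H]}^{k[G]}k$ of Theorem \ref{conditions for proper mocks}. A secondary point requiring care is verifying that twisting by a group automorphism carries projective contramodules to projective contramodules, but for the finite subgroup schemes at hand this reduces to the elementary fact that an automorphism of a finite-dimensional coalgebra induces an auto-equivalence of its contramodule category.
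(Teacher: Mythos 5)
Your argument follows the same route as the paper's own proof: projectivity of $P^{(r)}$ over $k[G(\mathbb{F}_q)]$ comes from exactness of the finite subgroup scheme (Lemma \ref{res proj to proj}) together with the fact that the Frobenius restricts to an automorphism of the finite group $G(\mathbb{F}_q)$, and non-projectivity over $k[G]$ comes from restricting to the Frobenius kernel $G_r$, on which the twist $P^{(r)}$ is trivial. The one place you diverge is the last step, and your extra care there is warranted: the paper simply asserts that the trivial (nonzero) $k[G_r]$-contramodule is not projective, whereas you correctly observe that this holds only when $G_r$ fails to be linearly reductive, equivalently only when $G^0$ is not a torus. That hypothesis does not appear in the statement, and the statement as literally written is false without it: if $G = \mathbb{G}_m$, then $k[G]$ is cosemisimple, so by Lemma \ref{contras_over_decomp_coalgebras} (as used in the proof of Theorem \ref{conditions for proper mocks}) every $k[G]$-contramodule is projective and in particular so is $P^{(r)}$; similarly, for $G$ finite \'etale (e.g.\ $G = \mathbb{Z}/p$) the Frobenius is an automorphism of $G$ and $P^{(r)}$ is again projective, so, as you note, not even condition (iii) of Theorem \ref{conditions for proper mocks} suffices --- the hypothesis genuinely needed is that $G^0$ is not a torus. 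So your proof establishes the proposition under that (necessary) additional assumption, which is evidently what the paper intends; in effect you have located and repaired an unstated hypothesis present in both the statement and the paper's own proof. The remaining ingredients you supply --- the compatibility $P^{(r)}|_H \cong (F^r|_H)^\ast(P|_H)$, the fact that an automorphism of a finite group scheme induces an auto-equivalence of its contramodule category, and the identification of $k[G_r]$-contramodules with $k[G_r]^\ast$-modules --- are exactly the steps the paper leaves implicit, and they are sound. One small simplification over the paper: you observe that $F$ is already an automorphism of $G(\mathbb{F}_q)$ (a bijective endomorphism of a finite group), so the paper's restriction to $r < s$ is unnecessary.
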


\begin{proof}
As $G(\mathbb{F}_q)$ is finite, it is exact in $G$ and therefore $\textup{Res}_{k[G(\mathbb{F}_q)]}^{k[G]}P$ is a projective contramodule. For $r < s$, the $r^{th}$ power of the Frobenius map is an automorphism of $G(\mathbb{F}_q)$, and so $P^{(r)}$ is also projective over $k[G(\mathbb{F}_q)]$. Finally, as a $k[G_r]$-contramodule it is trivial, thus is not projective over $k[G_r]$ and therefore cannot be projective over $k[G]$.
\end{proof}

\section{Mock projectives with cofinite radicals}\label{cofinite radicals}

In this section, $G$ is a connected reductive algebraic group scheme over a field $k$. We wish to investigate mock projective contramodules which have finite head. We begin our investigation by looking at contramodules over the coordinate ring of a unipotent group.

Let $U$ be a connected unipotent group over $k$ which is defined over $\mathbb{F}_p$ and let $k[U]$ be its coordinate ring. We first want to classify all simple $k[U]$-contramodules, it turns out that, just as in the case of $k[U]$-comodules, there is only one.

\begin{lemma}
Let $U$ be a unipotent algebraic group, then $k$ is the only simple $k[U]$-contramodule.
\end{lemma}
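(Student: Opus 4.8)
The plan is to establish two things: that the trivial contramodule $k$ is simple, and that it is the only simple $k[U]$-contramodule up to isomorphism. The first is immediate, since $k$ is one-dimensional and so has no proper nonzero subspaces, hence no proper nonzero subcontramodules. For the second, the key reduction is to show that every nonzero $k[U]$-contramodule $B$ admits a nonzero contramodule homomorphism onto $k$. Granting this, if $B$ is simple then such a surjection $B \twoheadrightarrow k$ has kernel a proper subcontramodule, which vanishes by simplicity, forcing $B \cong k$.

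First I would record that $C \defeq k[U]$ is a conilpotent coalgebra, i.e. that its coradical is $k \cdot 1$. Since $U$ is unipotent, every finite-dimensional representation is trivializable along a flag, so the only simple $k[U]$-comodule is the trivial one; equivalently, the only simple subcoalgebra of $k[U]$ is $k \cdot 1$. Writing $C = k \cdot 1 \oplus C^+$ with $C^+ = \kernel(\varepsilon)$, this says precisely that $C$ is pointed irreducible, so $C = \bigcup_n C_n$ is the union of its coradical filtration with $C_0 = k \cdot 1$ and the reduced comultiplication on $C^+$ conilpotent.

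The heart of the argument is then the contramodule Nakayama lemma for conilpotent coalgebras \cite{positselski2010homological}. For any contramodule $B$ there is a maximal \emph{trivial} quotient $\overline{B}$, namely the largest quotient contramodule on which the contra-action factors through the counit (so that $\overline{B}$ is just a vector space with contra-action $\phi \mapsto \phi(1)$). Nakayama's lemma asserts that conilpotency of $C$ forces $\overline{B} \neq 0$ whenever $B \neq 0$; heuristically this is because $\Hom_k(C,B) = \varprojlim_n \Hom_k(C_n, B)$ behaves like a completion, so that the equality $B = \theta_B\big(\Hom_k(C^+,B)\big)$ would propagate down the coradical filtration and force $B = 0$. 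Granting $\overline{B} \neq 0$, any nonzero linear functional on the vector space $\overline{B}$ is automatically a contramodule map $\overline{B} \to k$, since every subspace of a trivial contramodule is a subcontramodule; composing with $B \twoheadrightarrow \overline{B}$ yields the desired surjection $B \twoheadrightarrow k$, closing the reduction above.

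I expect the main obstacle to be pinning down the Nakayama lemma in exactly the form needed: verifying that the maximal trivial quotient $\overline{B}$ exists as a contramodule quotient and that conilpotency of $k[U]$ (rather than mere connectedness of $U$) is what guarantees $\overline{B} \neq 0$ for $B \neq 0$. Both points rest on the completeness properties of contramodules over conilpotent coalgebras established in Positselski's work, which is why I would cite that reference rather than reprove it; the only genuinely group-theoretic input is the standard fact that a unipotent group has the trivial representation as its unique irreducible, which is exactly what makes $k[U]$ conilpotent.
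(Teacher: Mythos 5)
Your proposal is correct and takes essentially the same route as the paper: both arguments reduce to Positselski's contramodule Nakayama lemma for the conilpotent coalgebra $\ker(\varepsilon) \subset k[U]$, the paper phrasing it as the statement that $S' = \theta\big(\Hom_k(\ker(\varepsilon),S)\big)$ is a \emph{proper} subcontramodule of $S$ (hence zero by simplicity, so $S$ is a trivial contramodule and equals $k$), while you phrase it equivalently as nonvanishing of the maximal trivial quotient $\overline{B} = B/\theta\big(\Hom_k(\ker(\varepsilon),B)\big)$. The only cosmetic difference is that you run the argument through a surjection $B \twoheadrightarrow k$ rather than through the vanishing of a subcontramodule, and you spell out why unipotence of $U$ makes $k[U]$ conilpotent, which the paper asserts without comment.
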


\begin{proof}
Let $\varepsilon: k[U] \longrightarrow k$ denote the counit map. Then $k[U] = k \oplus \ker(\varepsilon)$ where $\ker(\varepsilon)$ is a conilpotent coalgebra. Let $(S,\theta)$ be a simple $k[U]$-contramodule. Let $S' = \Image\big(\theta|_{\Hom_k(\ker(\varepsilon),S)}\big)$ denote the image of $\theta$ under the restriction to $\Hom_k\big(\ker(\varepsilon),S\big)$. Then we have $S' \subsetneq S$. \cite[Appendix A.2, Lemma 1]{positselski2010homological} Now consider the restriction of $\theta$ to $\Hom_k(k,S') \subset \Hom_k(k[U],S)$. We have by counity that $\theta|_{\Hom_k(k,S')}: (1 \mapsto s') \longmapsto s'$ and so $S'$ is a $k[U]$-subcontramodule of $S$ properly contained in $S$ and therefore $S' =0$. Thus, $S$ must be simple as a $k$-contramodule, which implies $S = k$.
\end{proof}

We now produce a proper mock projective contramodule over a unipotent group with cofinite radical.

\begin{proposition}
Let $r \geq 1$ and $q = p^r$. Then the proper mock projective $k[U]$-contramodule $B_r = \textup{Ind}_{U(\mathbb{F}_q)}^U k$ satisfies $B_r/\textup{rad}(B_r) \cong k$
\end{proposition}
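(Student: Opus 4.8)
The plan is to compute the head of $B_r$ via Frobenius reciprocity, reducing everything to the single point where the arithmetic of the finite group $U(\mathbb{F}_q)$ enters. Since $k$ is the unique simple $k[U]$-contramodule by the previous lemma, every maximal subcontramodule of $B_r$ is the kernel of a surjection $B_r \twoheadrightarrow k$; conversely, any nonzero contramodule map $B_r \to k$ is automatically surjective, as its image is a nonzero subcontramodule of the simple contramodule $k$. Hence $\textup{rad}(B_r) = \bigcap_{0 \neq f \in \Hom^{k[U]}(B_r,k)} \Ker f$, and the head $B_r/\textup{rad}(B_r)$ is entirely controlled by the space $\Hom^{k[U]}(B_r, k)$.

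First I would apply the induction--restriction adjunction to $B_r = \textup{Ind}_{U(\mathbb{F}_q)}^U k$. Taking $V = k$ in the adjunction yields
\[ \Hom^{k[U]}\big(B_r, k\big) \;\cong\; \Hom^{k[U(\mathbb{F}_q)]}\big(k, \textup{Res}_{k[U(\mathbb{F}_q)]}^{k[U]} k\big). \]
The trivial $k[U]$-contramodule restricts to the trivial $k[U(\mathbb{F}_q)]$-contramodule, so the right-hand side is $\Hom^{k[U(\mathbb{F}_q)]}(k,k)$. As $U(\mathbb{F}_q)$ is finite, the category $k[U(\mathbb{F}_q)]$-Contra is equivalent to the category of modules over the group algebra, under which $k$ corresponds to the trivial module, and therefore $\Hom^{k[U(\mathbb{F}_q)]}(k,k) \cong k$ is one-dimensional. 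Consequently $\Hom^{k[U]}(B_r, k) \cong k$.

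It then follows that, up to scalar, there is a single nonzero map $\pi : B_r \to k$, and all such maps share the kernel $\Ker\pi$. Thus $B_r$ has exactly one maximal subcontramodule, namely $\Ker\pi$, whence $\textup{rad}(B_r) = \Ker\pi$ and $B_r/\textup{rad}(B_r) = B_r/\Ker\pi \cong k$, as required.

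The step I expect to be the main obstacle is not the reciprocity computation but the verification that the radical behaves as in the classical module setting: one must know that $\textup{rad}(B_r)$ is a \emph{proper} subcontramodule (a Nakayama-type statement) and that it is precisely the intersection of the maximal subcontramodules. Properness is guaranteed here because $\Hom^{k[U]}(B_r, k) \neq 0$ produces an honest surjection onto $k$, and the structural facts about radicals of contramodules over the conilpotent coalgebra $k[U]$ can be invoked from Positselski's appendix, exactly as in the proof that $k$ is the unique simple contramodule. Some care is also needed to confirm that $B_r/\textup{rad}(B_r)$ is a trivial contramodule, so that identifying it by a dimension count on $\Hom^{k[U]}(-,k)$ is legitimate.
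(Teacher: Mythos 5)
Your proposal is correct and follows essentially the same route as the paper's proof: invoke the previous lemma that $k$ is the unique simple $k[U]$-contramodule, compute $\Hom^{k[U]}(B_r,k) \cong \Hom^{k[U(\mathbb{F}_q)]}(k,k) \cong k$ via the induction--restriction adjunction, and conclude that the head is $k$. The only difference is that you spell out the step the paper leaves as ``it follows'' --- identifying $\textup{rad}(B_r)$ with the kernel of the essentially unique surjection onto $k$ --- which is a welcome elaboration, not a departure.
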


\begin{proof}
Indeed, $B_r$ is a mock projective contramodule, as seen in the proof of Theorem \ref{conditions for proper mocks}. By the previous lemma, we know that $k$ is the only simple $k[U]$-contramodule, and furthermore, by the adjunction between induction and restriction we have 
\[\Hom^{U}(B_r,k) \cong \Hom^{U(\mathbb{F}_q)}(k,k) \cong k.\]
It follows that $B_r/\textup{rad}(B_r) \cong k$.
\end{proof}

\subsection{Parabolic subgroups of \texorpdfstring{$G$}{G}} 

\phantom{.}\newline
\noindent
We now turn our attention to contramodules associated to parabolic and Levi subgroups of algebraic groups. Let $G$ be a connected reductive algebraic group, fix a maximal torus $T$ and a Borel subgroup $B$ containing $T$. Then we have maps of coordinate rings $k[G] \longrightarrow k[B] \longrightarrow k[T]$ induced from the inclusions $T \subset B \subset G$. Choose simple roots $\Delta$ such that the root subgroups contained in $B$ correspond to negative roots. Note that our choice of simple roots $\Delta$ determines the set of dominant weights, which we denote $X(T)_+$.

For $J \subset \Delta$, let $P_J$ denote the corresponding parabolic subgroup of $G$ containing $B$, with unipotent radical $U_J$ and Levi factor $L_J$. Let $Z_J := Z(L_J)$ denote the center of the Levi factor, it can be verified that the central characters are given by $X(Z_J) = X(T)/\mathbb{Z}J$. Letting $\pi: X(T) \longrightarrow X(Z_J)$ denote the canonical quotient map, we see that $\pi(\mathbb{Z}\Phi) = \mathbb{Z}I$, where $I = \Delta / J$. It follows immediately that any $k[L_J]$ contramodule $(B,\theta)$ has a central character decomposition of the form
\[B = \prod_{\chi \in \mathbb{Z}I} B_\chi \]
where $B_\chi = \big\{b \in B : \phi(\chi) = \theta(\phi)$ for all $\phi \in \Hom_k(k[L_J],kb) \big\}$. Now, $\Hom_k(k[U_J],k)$ has a natural contramodule structure induced from the $k[L_J]$-comodule structure on $k[U_J]$. The preceding discussion along with \cite[Lemma 3.3.2]{hardesty2017existence} gives the following result.

\begin{lemma}
The $k[L_J]$-contramodule $\Hom_k(k[U_J],k)$ has a central character decomposition
\[\Hom_k(k[U_J],k) = \prod_{\chi \in \mathbb{N}I} \Hom_k(k[U_J]_\chi,k)\]
where $\dim\left(\Hom_k(k[U_J]_\chi,k)\right) < \infty$ for all $\chi \in \mathbb{N}I$.
\end{lemma}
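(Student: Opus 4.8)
The plan is to reduce the statement to the already-established central character decomposition of the \emph{comodule} $k[U_J]$ and then dualize. First I would recall that, by \cite[Lemma 3.3.2]{hardesty2017existence} together with the central character discussion preceding the statement, the right $k[L_J]$-comodule $k[U_J]$ decomposes as a direct sum $k[U_J] = \bigoplus_{\chi \in \mathbb{N}I} k[U_J]_\chi$, where $k[U_J]_\chi$ is the $Z_J$-central-character-$\chi$ subspace. Each such subspace is finite-dimensional: $k[U_J]$ is a polynomial algebra whose generators have central characters lying in the cone $\mathbb{N}I$, each a nonzero element, so a fixed $\chi$ is the sum of central characters of only boundedly many generators and hence is carried by only finitely many monomials.

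Applying the functor $\Hom_k(-,k)$, which converts the direct sum in the first variable into a product, immediately yields the vector-space identity
\[ \Hom_k(k[U_J],k) = \Hom_k\Big(\bigoplus_{\chi \in \mathbb{N}I} k[U_J]_\chi, k\Big) = \prod_{\chi \in \mathbb{N}I} \Hom_k(k[U_J]_\chi, k), \]
and each factor $\Hom_k(k[U_J]_\chi, k)$, being the linear dual of a finite-dimensional space, is finite-dimensional. This disposes of both the product shape and the finiteness claim; the only substantive point remaining is to verify that this product is precisely the central character decomposition of the \emph{contramodule} $\Hom_k(k[U_J],k)$, i.e. that the factor indexed by $\chi$ equals the contramodule central-character space.

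To establish this I would first observe that the contra-action on $\Hom_k(k[U_J],k)$ induced from the comodule structure on $k[U_J]$ agrees with the diagonal action of the bifunctor $\Hom_k(-,-)$ applied to the comodule $k[U_J]$ and the trivial left $k[L_J]$-contramodule $k$. A short Sweedler-notation check shows that both send $F \in \Hom_k\big(k[L_J],\Hom_k(k[U_J],k)\big)$ to the map $m \mapsto \sum F(m_{(1)})(m_{(0)})$, since the unit $\theta_k(\psi) = \psi(1)$ of the trivial contramodule collapses the extra multiplication appearing in the diagonal formula. With this identification I can invoke the weight-space computation of Section~\ref{Sec:intro}, which applies verbatim to the central characters of $Z_J$ (the weights restricted along $X(T) \to X(Z_J)$) in place of the weights of $T$, giving $\Hom_k(M,B)_{\chi} = \prod_{\alpha+\beta=\chi}\Hom_k(M_\alpha,B_\beta)$. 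Taking $B = k$, which is concentrated in central character $0$, forces $\beta = 0$ and $\alpha = \chi$, so that $\Hom_k(k[U_J],k)_\chi = \Hom_k(k[U_J]_\chi, k)$, exactly matching the factors above.

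The main obstacle is this last identification of the contramodule central-character space with the dual of the comodule character space. The difficulty is entirely bookkeeping: one must confirm that the two a priori different contra-actions on $\Hom_k(k[U_J],k)$ coincide, and then track the central-character condition through the tensor–hom adjunctions used in the weight-space computation. (Alternatively, one may check directly that a functional supported on $k[U_J]_\chi$ satisfies the defining condition $\phi(\chi) = \theta(\phi)$ of the contramodule space $B_\chi$, using that $\Delta$ sends $k[U_J]_\chi$ into central character $\chi$.) Once the coincidence of the actions is verified, the decomposition and the finite-dimensionality of each factor follow formally.
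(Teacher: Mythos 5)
Your proposal is correct and follows essentially the same route as the paper, which proves this lemma simply by combining the preceding central-character discussion with \cite[Lemma 3.3.2]{hardesty2017existence}: dualize the comodule decomposition $k[U_J] = \bigoplus_{\chi \in \mathbb{N}I} k[U_J]_\chi$, letting $\Hom_k(-,k)$ turn the direct sum into a product of finite-dimensional factors. Your additional verifications --- that the induced contra-action agrees with the diagonal action with $B = k$, and that $\Hom_k(k[U_J],k)_\chi = \Hom_k(k[U_J]_\chi,k)$ via the weight-space lemma --- are exactly the details the paper leaves implicit, and they check out.
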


Let $\lambda \in X(T)_+$ be a dominant weight. Denote the simple module of highest weight $\lambda$ by $L(\lambda)$, and let $P(\lambda) \in k[G]$-Contra denote the projective cover of $L(\lambda)$ 

\begin{lemma}
Let $M$ be a finite dimensional right $k[G]$-comodule with linear dual $M^*$, and let $\lambda,\mu \in X_+(T)$. Then:
\[ \dim \Big(\Hom^{k[G]}\big(\Hom(M,P(\lambda)),L(\mu)\big)\Big) = \big[\Hom\big(M^*,L(\mu)\big):L(\lambda)\big].\]
\end{lemma}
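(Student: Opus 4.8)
The plan is to move the finite-dimensional comodule $M$ out of the first argument of $\Hom^{k[G]}$ and into the second as its dual, thereby rewriting the left-hand side as a Hom out of the projective cover $P(\lambda)$, and then to read off the multiplicity from the defining property of $P(\lambda)$. Throughout I would use that, since $M$ is finite-dimensional, the endofunctor $\Hom_k(M,-)$ of $k[G]$-Contra is canonically isomorphic to $M^{*}\otimes_{k}-$, and likewise $\Hom_k(M^{*},-)\cong M\otimes_{k}-$, both carrying the diagonal contra-action.

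The key step is an adjunction. For finite-dimensional $M$ the evaluation and coevaluation maps exhibit $\Hom_k(M,-)$ as a left adjoint to $\Hom_k(M^{*},-)$ as endofunctors of $k[G]$-Contra; concretely, for all $A,B\in k[G]\text{-Contra}$ I would establish a natural isomorphism
\[\Hom^{k[G]}\big(\Hom_k(M,A),B\big)\;\cong\;\Hom^{k[G]}\big(A,\Hom_k(M^{*},B)\big),\]
both sides being computed with respect to the diagonal contra-action. The underlying linear map is the standard duality isomorphism $\Hom_k(M^{*}\otimes A,B)\cong\Hom_k(A,M\otimes B)$; the content is that, relative to the diagonal actions, it restricts to a bijection between contramodule homomorphisms. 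I would verify this exactly in the spirit of the proof of Lemma \ref{Hom-identity}: write down the unit and counit built from ev and coev and check by a Sweedler-notation diagram chase that they are $k[G]$-contramodule morphisms, the antipode entering through the comodule structure on $M^{*}$. Specialising to $A=P(\lambda)$ and $B=L(\mu)$ yields
\[\Hom^{k[G]}\big(\Hom_k(M,P(\lambda)),L(\mu)\big)\;\cong\;\Hom^{k[G]}\big(P(\lambda),\Hom_k(M^{*},L(\mu))\big).\]

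It then remains to evaluate the right-hand side. Since $M^{*}$ and $L(\mu)$ are finite-dimensional, so is $V\coloneqq\Hom_k(M^{*},L(\mu))$. As $P(\lambda)$ is the projective cover of $L(\lambda)$, the functor $\Hom^{k[G]}(P(\lambda),-)$ is exact and satisfies $\Hom^{k[G]}(P(\lambda),L(\nu))\cong\delta_{\lambda\nu}k$, the head of $P(\lambda)$ being $L(\lambda)$ and $\End^{k[G]}(L(\lambda))=k$. Running along a composition series of $V$ therefore gives $\dim\Hom^{k[G]}(P(\lambda),V)=[V:L(\lambda)]$. Finally, under the equivalence between finite-dimensional $k[G]$-contramodules and finite-dimensional $k[G]$-comodules, the diagonal contra-action on $V=\Hom_k(M^{*},L(\mu))$ corresponds to the internal-Hom comodule $\Hom(M^{*},L(\mu))$, and composition multiplicities are preserved, so $[V:L(\lambda)]=[\Hom(M^{*},L(\mu)):L(\lambda)]$, which is the claimed right-hand side.

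I expect the \emph{main obstacle} to be the adjunction of the second paragraph, namely checking that the linear duality isomorphism is compatible with the diagonal contra-actions on both sides; this is the same kind of bookkeeping as in Lemma \ref{Hom-identity}, and once ev and coev are confirmed to be contramodule maps the remaining steps are formal. A minor secondary point is to confirm the identification of the finite-dimensional diagonal contramodule $\Hom_k(M^{*},L(\mu))$ with the comodule $\Hom(M^{*},L(\mu))$, so that the multiplicity appearing on the right is the one the statement intends.
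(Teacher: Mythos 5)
Your proposal is correct and takes essentially the same approach as the paper: the paper's proof likewise rests on the isomorphism $\Hom^{k[G]}\big(\Hom(M,P(\lambda)),L(\mu)\big) \cong \Hom^{k[G]}\big(P(\lambda),\Hom(M^*,L(\mu))\big)$, followed by exactness of $\Hom^{k[G]}(P(\lambda),-)$ and induction on composition length to read off the multiplicity $\big[\Hom(M^*,L(\mu)):L(\lambda)\big]$. The only difference is presentational: you package the key step as an ev/coev adjunction to be verified by a Sweedler-notation diagram chase, while the paper simply writes down the two mutually inverse maps explicitly.
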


\begin{proof}
Let $M$ have basis $\{m_i\}$ and $M^*$ have dual basis $\{m_i^*\}$. Then one checks that we have the following isomorphism:
\begin{align*}
\Hom^{k[G]}\big(\Hom\big(M,P(\lambda)\big),L(\mu)\big) &= \Hom^{k[G]}\big(P(\lambda),\Hom\big(M^*,L(\mu)\big)\big) \\ \Big(f \longmapsto \sum_{i} \big((\phi \circ f)(m_i)\big)(m_i^*)\Big) &\longleftarrow \phi  \\
\psi &\longrightarrow \Big(p \longmapsto \big(\alpha \longmapsto \phi\big(m \longmapsto \alpha(m)p\big)\big)\Big)
\end{align*}
where both $M$ and $M^*$ are viewed as right comodules and $\Hom(-,-)$ is a contramodule via the diagonal action. Since $P(\lambda)$ is projective, $\Hom(P(\lambda),-)$ is exact and so by induction on the composition length, one may show that $\dim \Big(\Hom^{k[G]}\big(\Hom(M,P(\lambda)),L(\mu)\big)\Big)$ is exactly the number of times $L(\lambda)$ appears as a composition factor in $\Hom\big(M^*,L(\mu)\big)$, as required.
\end{proof}

One may inflate a $k[L_J]$ contramodule $M$ to a $k[P_J]$ contramodule via the composition
\[\Hom_k(k[P_J],M) \longrightarrow \Hom_k(k[L_J],M) \longrightarrow M. \]
Given a weight $\lambda \in X(T)$ which is dominant for $L_J$, let $M = L_J(\lambda)$ denote the simple $L_J$ module with highest weight $\lambda$. We denote the inflation by $L_{P_J}(\lambda)$. The projective cover $P_{P_J}(\lambda)$ of $L_{P_J}(\lambda)$ is given by 
\[P_{P_J}(\lambda) = \textup{Ind}_{k[L_J]}^{k[P_J]} P_{L_J}(\lambda) \cong  \Hom_k\big(k[U_J],P_{L_J}(\lambda)\big)\]
where the isomorphism is a consequence of Lemma \ref{semi-direct-induction}.

\begin{lemma}
Let $P$ be a projective $k[P_J]$ contramodule with cofinite dimensional radical, then we have 
\[P|_{k[L_J]} = \prod_{\lambda \in X(T)} P_{L_J}(\lambda)^{n_\lambda}\]
where $n_\lambda < \infty$ for all weights $\lambda \in X(T)$.
\end{lemma}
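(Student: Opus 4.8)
The plan is to reduce the statement to the case of a single indecomposable projective, make that projective explicit via the semidirect-product description, and then read off the restriction to $k[L_J]$ using central characters. First I would exploit the hypothesis that $P$ has cofinite-dimensional radical, i.e. finite-dimensional head: the semisimple head $P/\textup{rad}(P)$ is a finite direct sum $\bigoplus_\lambda L_{P_J}(\lambda)^{m_\lambda}$ with only finitely many $m_\lambda\neq 0$, each finite. Since projective contramodules are projective covers of their heads, this forces a finite decomposition $P \cong \bigoplus_\lambda P_{P_J}(\lambda)^{m_\lambda}$. As $\textup{Res}_{k[L_J]}^{k[P_J]}$ is additive, it then suffices to analyse a single $P_{P_J}(\lambda)|_{k[L_J]}$ and to check that the resulting multiplicities, aggregated over the finite family $(m_\lambda)$, stay finite for each weight.

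Next I would make the restriction explicit. By Lemma \ref{semi-direct-induction} we have $P_{P_J}(\lambda) \cong \Hom_k\big(k[U_J],P_{L_J}(\lambda)\big)$ with the diagonal action, and restricting the $k[P_J]$-action to $k[L_J]$ leaves the diagonal action, where $k[U_J]$ carries its conjugation $k[L_J]$-comodule structure. Using the central-character decomposition of $k[U_J]$ established above, $k[U_J] = \bigoplus_{\chi \in \mathbb{N}I} k[U_J]_\chi$ with each $k[U_J]_\chi$ finite-dimensional, so that
\[ P_{P_J}(\lambda)|_{k[L_J]} \cong \prod_{\chi \in \mathbb{N}I} \Hom_k\big(k[U_J]_\chi, P_{L_J}(\lambda)\big). \]
Each factor is a projective $k[L_J]$-contramodule by the corollary to Lemma \ref{Hom-identity}, and, by the weight computation for $\Hom_k(-,-)$, it is concentrated in the single central character $\chi + \pi(\lambda) \in X(Z_J)$.

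The heart of the argument is a central-character matching. The indecomposable projective $P_{L_J}(\mu)$ has central character $\pi(\mu)$, so it can occur as a summand only of the unique factor indexed by $\chi = \pi(\mu) - \pi(\lambda)$, and only when this lies in $\mathbb{N}I$; thus at most one $\chi$ contributes any given $P_{L_J}(\mu)$. Within that factor the multiplicity is finite: applying the $\dim\Hom$ formula established above to the finite-dimensional comodule $M = k[U_J]_\chi$ gives
\[ \dim \Hom^{k[L_J]}\big(\Hom_k(k[U_J]_\chi, P_{L_J}(\lambda)), L(\mu)\big) = \big[\Hom(k[U_J]_\chi^*, L(\mu)) : L(\lambda)\big] < \infty, \]
the right-hand side being a composition multiplicity inside a finite-dimensional comodule. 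Interpreting this $\Hom$-dimension as the multiplicity of $P_{L_J}(\mu)$ (valid since $\Hom^{k[L_J]}(P_{L_J}(\nu),L(\mu)) = \delta_{\nu\mu}\,k$ as $k$ is algebraically closed), we get $P_{P_J}(\lambda)|_{k[L_J]} \cong \prod_\mu P_{L_J}(\mu)^{n^\lambda_\mu}$ with every $n^\lambda_\mu < \infty$; summing over the finitely many $\lambda$ with multiplicities $m_\lambda$ yields the claimed $P|_{k[L_J]} = \prod_\lambda P_{L_J}(\lambda)^{n_\lambda}$ with $n_\lambda < \infty$.

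The main obstacle I anticipate is the structural bookkeeping for projective contramodules: both the projective-cover decomposition $P \cong \bigoplus_\lambda P_{P_J}(\lambda)^{m_\lambda}$ and the identification of each $\dim \Hom^{k[L_J]}(-, L(\mu))$ with an honest multiplicity of $P_{L_J}(\mu)$ inside an \emph{infinite product} of projectives require care, since homomorphisms out of infinite products of contramodules into a simple object are delicate. I would route both steps through the central-character decomposition of Lemma \ref{contras_over_decomp_coalgebras}, which confines the analysis for each target weight $\mu$ to a single block, reducing the multiplicity count to the finite-dimensional computation supplied by the $\dim\Hom$ formula and making the uniqueness of the contributing $\chi$ do the essential work.
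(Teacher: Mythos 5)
Your proposal is correct and follows exactly the route the paper prescribes: the paper gives no detailed proof of this lemma, instead deferring to the central-character argument of Hardesty--Nakano--Sobaje's Proposition 3.4.3 together with the contramodule tools already established (the semidirect-product induction lemma, the projectivity corollary to the hom identity, the weight and central-character decompositions, and the $\dim\Hom$ formula), which is precisely what you assemble. Your write-up is, if anything, more detailed than the paper's, and it correctly flags the one point genuinely needing care, namely justifying that a projective contramodule concentrated in a single central character decomposes into indecomposable projective covers before the $\dim\Hom$ computation can be read as a multiplicity.
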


Much as in the analogous result for injective modules of $P_J$ with finite dimensional socle, \cite[Proposition 3.4.3]{hardesty2017existence}, one must turn to using central characters and the fact the homomorphisms will preserve weight spaces for the proof. All required results to produce an analog of this proof have been proven for contramodules. We leave the necessary modifications to the reader.

Let $F: P_J \longrightarrow P_J$ be the Frobenius morphism and let $(P_J)_rL_J = (F^r)^{-1}(L_J).$ We have the following result. 

\begin{proposition}
Let $B$ be a $k[P_J]$ contramodule with cofinite dimensional radical which is projective as a $k[(P_J)_rL_J]$ contramodule for all $r \geq 1$, then $P$ is projective as a $k[P_J]$ contramodule.
\end{proposition}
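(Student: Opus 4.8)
The plan is to realise $B$ as a quotient of its projective cover and to show that the kernel vanishes. Since $B$ has cofinite dimensional radical, its head $B/\operatorname{rad}(B)$ is a finite dimensional semisimple $k[P_J]$-contramodule; as $U_J$ is unipotent its only simple contramodule is trivial, so every simple $k[P_J]$-contramodule is an inflation $L_{P_J}(\lambda)$ and we may write $B/\operatorname{rad}(B) \cong \bigoplus_i L_{P_J}(\lambda_i)$ with the sum finite. Setting $Q := \bigoplus_i P_{P_J}(\lambda_i)$ we obtain a projective cover $\pi : Q \twoheadrightarrow B$ whose kernel $K$ is superfluous, i.e. $K \subseteq \operatorname{rad}(Q)$. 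It then suffices to prove $K = 0$; since restriction does not alter the underlying vector space, it is enough to show $K|_{k[L_J]} = 0$.

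First I would record the restriction behaviour of the short exact sequence $0 \to K \to Q \to B \to 0$. Writing $(P_J)_rL_J = (U_J)_r \rtimes L_J$, both $(P_J)_rL_J$ and $L_J$ are exact in $P_J$ (the relevant quotients $U_J/(U_J)_r$ and $U_J$ are affine), so restriction along either inclusion preserves projectivity by Lemma \ref{res proj to proj}. Hence $Q$ restricts to a projective contramodule over each of these subgroups, while $B$ does so over $(P_J)_rL_J$ by hypothesis and therefore also over $L_J$. Consequently the sequence splits over $L_J$, and $K|_{k[L_J]}$ is a direct summand of $Q|_{k[L_J]}$. Applying the preceding lemma to the projective contramodule $Q$ (which has cofinite dimensional radical) gives $Q|_{k[L_J]} \cong \prod_\mu P_{L_J}(\mu)^{n_\mu}$ with each $n_\mu < \infty$; thus $K|_{k[L_J]} \cong \prod_\mu P_{L_J}(\mu)^{k_\mu}$ with $k_\mu \leq n_\mu$, and the task is reduced to showing every $k_\mu = 0$.

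To extract the vanishing I would decompose everything along central characters $\chi \in X(Z_J)$ and argue by induction on $\chi$ for the partial order in which $\chi \leq \chi'$ means $\chi' - \chi \in \mathbb{N}I$; this is well founded on the support of $Q$, which lies in $\bigcup_i(\pi(\lambda_i) + \mathbb{N}I)$. For a minimal central character $\chi_0$ the unipotent radical acts trivially on $Q_{\chi_0}$ (any non-trivial $k[U_J]$-weight moves the central character strictly up the order), so $Q_{\chi_0}$ is the inflation of $\bigoplus_{\pi(\lambda_i)=\chi_0} P_{L_J}(\lambda_i)$ and the $P_J$- and $L_J$-radicals agree in this block. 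Then $K_{\chi_0}|_{k[L_J]}$ is simultaneously a direct summand of $\bigoplus_{\pi(\lambda_i)=\chi_0} P_{L_J}(\lambda_i)$ and contained in its radical, which forces $K_{\chi_0} = 0$.

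The inductive step is where the main difficulty lies, and it is precisely here that mock projectivity over all $(P_J)_rL_J$ must be used rather than projectivity over $L_J$ alone. For a fixed $\chi$ the block $Q_\chi$ receives contributions $\Hom_k\big(k[U_J]_{\chi - \pi(\lambda_i)}, P_{L_J}(\lambda_i)\big)$ in which $U_J$ now acts non-trivially, so the comparison of radicals used in the base case fails. The key observation is that, by the central character lemma above, the finitely many central characters $\leq \chi$ meeting the support of $Q$ involve only finitely many weights of $k[U_J]$, and these are already detected by $k[(U_J)_r]$ once $r$ is large enough; hence on the truncation to central characters $\leq \chi$ the $k[P_J]$- and $k[(P_J)_rL_J]$-contramodule structures of $Q$, $B$ and $K$ coincide for $r \gg 0$. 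Transporting the projectivity of $B$, and hence the splitting of the sequence, from level $(P_J)_rL_J$ to the $\chi$-truncation, together with the inductive hypothesis that $K$ vanishes in all strictly smaller central characters, lets one repeat the summand-inside-the-radical argument and conclude $K_\chi = 0$. Carrying this through all $\chi$ yields $K = 0$, so $\pi$ is an isomorphism and $B$ is projective; the delicate point to get right is this stabilisation matching the $P_J$-structure with a finite Frobenius level, which is the analogue for contramodules of the socle-finite central character estimates in \cite[Proposition 3.4.3]{hardesty2017existence}.
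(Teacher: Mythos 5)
Your proposal is correct in outline but takes a genuinely different route from the paper after the common first step. Both proofs begin by identifying the projective cover of $B$ as $Q \cong \Hom_k(k[U_J],P)$ for a projective $k[L_J]$-contramodule $P$ with cofinite dimensional radical (your $\bigoplus_i P_{P_J}(\lambda_i)$ is exactly this, via $P_{P_J}(\lambda) \cong \Hom_k(k[U_J],P_{L_J}(\lambda))$ and Lemma \ref{semi-direct-induction}); from there the mechanisms diverge. The paper never examines the kernel of the cover: it uses projectivity of $B$ over each $k[(P_J)_rL_J]$ to produce surjections $B \twoheadrightarrow \Hom_k\big(k[(U_J)_r],P\big)$, asserts the identification $\Hom_k(k[U_J],P)=\bigcup_r \Hom_k\big(k[(U_J)_r],P\big)$, and passes to the colimit to obtain a surjection $B \twoheadrightarrow \Hom_k(k[U_J],P)$, forcing the cover to be an isomorphism. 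You instead kill the kernel $K$ of the cover block-by-block along central characters, using the splitting over $L_J$ together with the stabilisation observation that, on a truncation of $Q$, $B$, $K$ to a finite downward-closed window of central characters, the $k[P_J]$-contramodule structure is already determined by the $k[(P_J)_rL_J]$-structure once $p^r$ exceeds the height of the window (the kernel of $k[U_J]\to k[(U_J)_r]$ is concentrated in weights of height at least $p^r$), so the splitting supplied by mock projectivity becomes a $P_J$-splitting there and the summand-inside-the-radical (Nakayama) argument applies. This is the contramodule transplant of the central-character estimates behind \cite[Proposition 3.4.3, 3.4.4]{hardesty2017existence}, and it makes explicit where the paper's preceding central-character lemmas enter --- something the paper's own terse proof never does; indeed your truncation argument localises the delicate point better, since the paper's union identification is problematic if read literally (a linear map out of $k[U_J]$ need not factor through any $k[(U_J)_r]$; one must first pass to central character blocks, each involving only finitely many $k[U_J]$-weights, which is precisely your repair). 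Two caveats on your write-up: the claim $K|_{k[L_J]} \cong \prod_\mu P_{L_J}(\mu)^{k_\mu}$ is a Krull--Schmidt-type assertion for infinite products that you do not justify, but it is also never needed --- only projectivity of $K|_{k[L_J]}$ and the summand property are used; and the inductive step, which you rightly flag as the crux, should be completed by stating and proving the stabilisation claim precisely (that $(P_J)_rL_J$-morphisms between contramodules supported on the window are automatically $P_J$-morphisms for $r \gg 0$) and by recording the contramodule Nakayama fact that a direct summand contained in the radical vanishes; both are true, and with them your argument closes.
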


\begin{proof}
Since $M/\textup{rad}(M)$ is finite dimensional, it follows that projective cover of $M$ in the category $k[P_J]$-Contra is of the form $\Hom_k(k[U_J],P)$ for some projective $k[L_J]$ contramodule $P$ with cofinite dimensional radical. This gives us a projection 
\[\Hom_k(k[U_J],P) \longrightarrow M.\]
Since by assumption $M|_{k[(P_J)_rL_J]}$ is projective for all $r > 0$ we have projections $\big($of $k[(P_J)_rL_J]$ contramodules$\big)$ of the form $M \longrightarrow \Hom_k\big((k[U_J])_r,P\big)$.
It suffices for us to show that we have $ \displaystyle \Hom_k(k[U_J],P) = \bigcup_r \Hom_k\big((k[U_J])_r,P\big)$, but this follows from the fact that the coordinate ring of an algebraic group is the projective limit of the coordinate rings of its Frobenius kernels. Since colimits commute with colimits, and in particular unions commute with cokernels, we have a projection $M \longrightarrow \Hom_k(k[U_J],P)$ and so $M = \Hom_k(k[U_J],P)$. Thus $M$ is projective as a $k[P_J]$ contramodule, as required.
\end{proof}

We conclude with a corollary in the case when $J = \phi$.  

\begin{corollary}
Let $B$ be a mock projective $k[B]$ contramodule which cofinite dimensional radical, then $B$ is a projective $k[B]$ contramodule.
\end{corollary}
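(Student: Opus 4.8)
The plan is to derive this as the $J=\phi$ specialisation of the preceding Proposition. When $J=\phi$ one has $P_\phi = B$, Levi factor $L_\phi = T$, unipotent radical $U_\phi = U$, and the Frobenius--Levi subgroup $(P_\phi)_r L_\phi = (F^r)^{-1}(T)$, which is the semidirect product $B_r T = U_r \rtimes T$ (indeed, writing $b = ut$ one sees $F^r(b) \in T$ exactly when $u \in \Ker(F^r|_U) = U_r$, while $B_r = U_r T_r$). The Proposition then asserts that a $k[B]$-contramodule with cofinite-dimensional radical that is projective over each $k[B_r T]$ is projective over $k[B]$. So, writing $V$ for the given mock projective contramodule to avoid the clash with the Borel $B$, it suffices to upgrade the hypothesis ``$V$ is projective over every Frobenius kernel $B_r$'' to ``$V$ is projective over every $B_r T$''.

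First I would record the group-theoretic facts: $B_r = U_r T_r$ is a finite, hence exact, \emph{normal} subgroup scheme of $B_r T = U_r T$, and the quotient $B_r T/B_r \cong T$ is a torus; thus $k[T]$ is cosemisimple and, by Lemma \ref{contras_over_decomp_coalgebras}, every $k[T]$-contramodule is a product of simples and in particular projective. Equivalently, $T$ is linearly reductive.

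The heart of the argument is the bridge from $B_r$-projectivity to $B_r T$-projectivity, which I would run by a reductive-quotient averaging. Choose a projective cover $\pi\colon P \twoheadrightarrow V$ in $k[B_r T]$-Contra. Restricting along the exact inclusion $B_r \hookrightarrow B_r T$, the contramodule $P|_{k[B_r]}$ is projective by Lemma \ref{res proj to proj}, and by hypothesis so is $V|_{k[B_r]}$; hence $\pi$ admits a $k[B_r]$-contramodule splitting $\sigma$. The set of all such $B_r$-linear splittings is an affine subspace of $\Hom^{k[B_r]}(V,P)$, a torsor under $\Hom^{k[B_r]}(V,\Ker\pi)$, and it is stable under the conjugation action of $T$, since $T$ normalises $B_r$ and commutes with $\pi$. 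Because $T$ is linearly reductive, this affine $T$-space has a $T$-fixed point $\sigma_0$. As $B_r T$ is generated by $B_r$ and $T$, a map that is simultaneously $B_r$-linear and $T$-equivariant is $k[B_r T]$-linear; thus $\sigma_0$ splits $\pi$ over $k[B_r T]$ and exhibits $V$ as a direct summand of the projective $P$. Therefore $V$ is projective over $k[B_r T]$ for every $r \geq 1$.

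With the hypotheses of the preceding Proposition verified, I would conclude that $V$ is projective over $k[B]$, which is the assertion. The step I expect to be the main obstacle is the averaging: one must make precise, in the contramodule setting, that the space of $B_r$-linear splittings carries a genuine $k[T]$-contramodule structure and that linear reductivity of $T$ produces an invariant splitting. Here the subtlety is that contramodule weight decompositions are \emph{products} rather than direct sums (cf.\ the weight-space lemma for $\Hom_k(M,B)$), so extracting the trivial-weight component must be justified via the cosemisimplicity of $k[T]$ from Lemma \ref{contras_over_decomp_coalgebras} rather than a naive finite average.
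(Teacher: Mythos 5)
Your route is the same as the paper's: specialise the preceding Proposition to $J=\phi$ (your identification $(P_\phi)_rL_\phi=(F^r)^{-1}(T)=U_rT=B_rT$ is correct) and then upgrade projectivity over every $k[B_r]$ to projectivity over every $k[B_rT]$. The paper dispatches that second step in a single clause, invoking ``the fact that a contramodule is projective as a $k[B_rT]$ contramodule if and only if it is projective as a $k[B_r]$ contramodule'' with no proof; so your proposal is, if anything, more complete than the printed one.

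The caveat concerns your averaging argument for that bridging fact, and the obstacle you flag yourself is the real one. The paper's bifunctor $\Hom_k(-,-)$ produces a contramodule from a \emph{comodule} and a contramodule; it does not endow $\Hom^{k[B_r]}(V,P)$ --- a Hom-space between two contramodules --- with any $k[T]$-contramodule structure, so ``linear reductivity of $T$ yields a fixed splitting'' cannot be quoted as it stands. The workable repair is close to what you suggest at the end: restrict $V$ and $P$ along $T\subset B_rT$ and use Lemma \ref{contras_over_decomp_coalgebras} to write them as weight-space products $V=\prod_\lambda V_\lambda$, $P=\prod_\lambda P_\lambda$; given a $B_r$-linear splitting $\sigma$ of the ($B_rT$-linear, hence weight-preserving) surjection $\pi$, replace $\sigma$ by its diagonal component $\sigma_0=\prod_\lambda\big(p_\lambda\circ\sigma\circ\iota_\lambda\big)$, where $\iota_\lambda$ and $p_\lambda$ are the inclusion of and projection onto the $\lambda$-components. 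Then $\pi\sigma_0=\mathrm{id}$ is immediate since $\pi$ is weight-preserving, but one must still verify that $\sigma_0$ remains $B_r$-linear, and this requires spelling out how the $k[B_r]$-contra-action of a $k[B_rT]$-contramodule interacts with the weight-product decomposition --- which is precisely the structure theory that the paper's asserted ``fact'' encodes. So: same skeleton as the paper, correct reduction, and the remaining hole in your write-up sits exactly where the paper itself left one.
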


\begin{proof}
Let $J = \phi$. Then $P_\phi = B$ and $L_\phi = T$. The result follows by the previous result and the fact that a contramodule is projective as a $k[B_rT]$ contramodule if and only if it is projective as a $k[B_r]$ contramodule.
\end{proof}

\printbibliography

\end{document}